\newtheorem{theorem}{Theorem}
\theoremstyle{plain}
\newtheorem{corollary}{Corollary}
\newtheorem{remark}{Remark}
\numberwithin{equation}{section}
\email{amsengouga@gmail.com,abdelmouhcene.sengouga@univ-msila.dz}
\subjclass[2010]{35L05, 35C10, 93D15.}
\keywords{Axially travelling strings, Fourier series, energy estimates, boundary stabilization.
}
\begin{document}
\title[Decay estimates for an axially travelling string]{Energy decay estimates for an axially travelling string damped at one end}

\author[S. Ghenimi]{Seyf Eddine Ghenimi}
\author[A. Sengouga]{Abdelmouhcene Sengouga}
\address[Seyf Eddine Ghenimi, Abdelmouhcene Sengouga]{ Laboratory of Functional Analysis and Geometry of Spaces\\
Department of mathematics\\
Faculty of Mathematics and Computer Sciences\\
University of M'sila\\
28000 M'sila, Algeria.}
\date{\today}
\maketitle

\begin{abstract}
We study the small vibrations of an axially travelling string with a dashpoint damping at one end. The string is modelled by a wave equation in a time-dependent interval with two endpoints moving at a constant speed $v$. For the undamped case, we obtain a conserved functional equivalent to the energy of the solution. We derive precise upper and lower estimates for the exponential decay of the energy with explicit constants. These estimates do not seem to be reported in the literature even for the non-travelling case $v=0$.\end{abstract}


\section{Introduction}

We consider small transversal vibrations of a uniform string travelling with
a constant speed $v$ between to two pulleys (inlet and outlet) kept at a
fixed distance $L$. The mechanical setting is sketched in Figure \ref{fig1}
where the inlet is fixed while the outlet is allowed to move transversely
and attached to a damping device (a dashpoint with a damping factor $\eta $).

\begin{figure}[tbph]
\centering\includegraphics[width=0.67\textwidth]{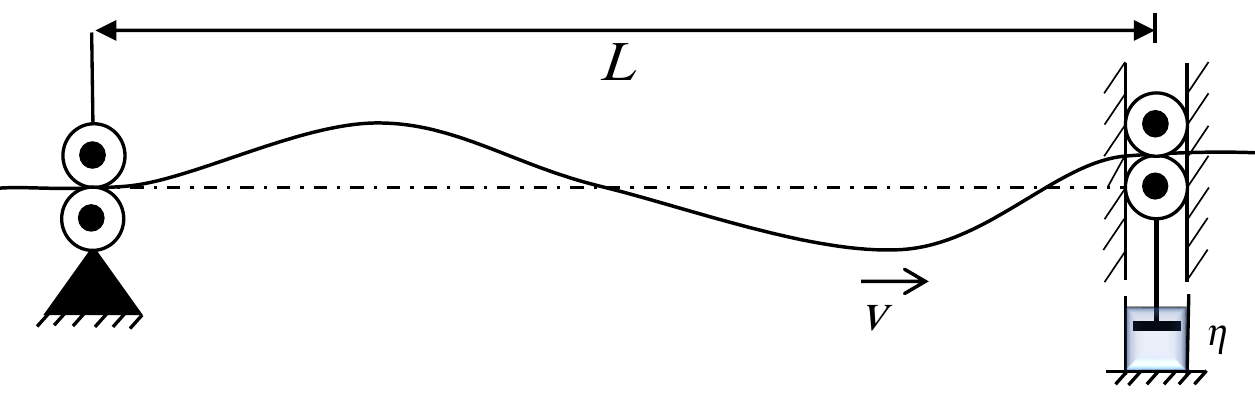}
\caption{An axially travelling string with a dash-point at the outlet pulley.}
\label{fig1}
\end{figure}

Many mechanical devices with axially moving continua, such as power
transmission chains and belts, magnetic tapes, band saws and fibre winders,
see for instance \cite{MaKa2014,HoPh2019,Chen2005,Mira1960,Mote1965}, are
limited in their efficiency and utility due to unwanted vibrations. As a
result, stabilization of axially moving systems is necessary to reduce or
eliminate these vibrations and improve the overall performance and
productivity of these mechanical systems.

The existing approaches in the literature describe the above problem in
fixed space coordinates, see for instance \cite%
{Chen2005,MaKa2014,TaYi1997,ChFe2014,CHZW2019,CYFZ2021,LeMo1996}. Gaiko and
van Horssen \cite{GaHo2015} considered a simplified mathematical model
describing the small vibration of the string with a mass-spring-dashpoint
damping at the outlet. Under the restriction that the speed $v$ and the damping
factor are small, i.e. $0<v\ll 1$ and $\eta \ll 1$ the authors in \cite%
{GaHo2015} obtained an asymptotic approximation for the solution of (\ref%
{wave0L}) using a multiple scale approach.

Denoting the displacement function by $u$, depending
on the position $s$ along the string and the time $\tau $, with only a
dashpoint damping at the outlet, their model can be stated as follows 
\begin{equation}
\left\{ 
\begin{array}{ll}
u_{\tau \tau }+2vu_{s\tau }-\left( 1-v^{2}\right) u_{ss}=0, & \text{ for}%
\medskip \ s\in \left( 0,L\right) \text{ and } \tau >0, \\ 
u\left( 0,\tau \right) =0,\text{ \ \ } & \text{ for}\medskip \text{ }\tau >0,
\\ 
\left( 1-v^{2}\right) u_{s}\left( L,\tau \right) +\left( \eta -v\right)
u_{\tau }\left( L,\tau \right) =0,\text{ \ } & \text{ for}\medskip \text{ }%
\tau >0, \\ 
u(s,0)=u^{0}\left( s\right) ,\text{ \ \ }u_{\tau }\left( s,0\right)
=u^{1}\left( s\right) , & \text{ for }s\in \left( 0,L\right) ,%
\end{array}%
\right.  \label{wave0L}
\end{equation}%
where the subscripts $\tau $ and $s$ stand for the derivatives in time and
space variables respectively. The functions $u^{0}$ and $u^{1}$ represents
the initial shape and the initial transverse speed of the string,
respectively. 

In the present work, we do not consider the magnitudes of $v$ and $\eta $ as
small ones. We only assume that $\eta \geq 0$ and that speed $v$ is strictly
less then the speed of propagation of the wave (here normalized to $c=1$),
i.e.%
\begin{equation}
0\leq v<1.  \label{tlike}
\end{equation}%
If the speed $v$ approaches the critical speed $c=1$, an instability will
occur, as shown by \cite{Mote1965,LeMo1996}. Another key difference with
most of the existing works is that we consider the model in a moving space
coordinates. We introduce the variables%
\begin{equation*}
s=L-x+vt\text{ \ and \ }\tau =t,
\end{equation*}%
hence 
\begin{equation*}
x\in \mathbf{I}_{t}:=\left( vt,L+vt\right) ,\text{ for }t\geq 0,
\end{equation*}%
which is an interval travelling in the positive sense of the real axis (as
in \cite{GhSe2022,Mira1960}). It follows that%
\begin{equation*}
\partial _{s}=-\partial _{x}\text{ \ \ and \ \ }\partial _{\tau }=v\partial
_{x}+\partial _{t}.
\end{equation*}%
Rewriting Problem (\ref{wave0L}) in the new coordinates, we obtain the
following (pure) wave equation with a damping at the moving boundary $x=vt,$%
\begin{equation}
\left\{ 
\begin{array}{ll}
\phi _{tt}-\phi _{xx}=0,\ \smallskip & \text{for }x\in \mathbf{I}_{t}\text{
and }t>0, \\ 
\left( 1-\eta v\right) \phi _{x}\left( vt,t\right) -\left( \eta -v\right)
\phi _{t}\left( vt,t\right) =0,\text{\ }\smallskip \text{ } & \text{for }t>0,
\\ 
\phi \left( L+vt,t\right) =0\text{,}\smallskip & \text{for }t>0, \\ 
\phi (x,0)=\phi ^{0}\left( x\right) ,\text{ }\phi _{t}\left( x,0\right)
=\phi ^{1}\left( x\right) ,\text{ \ } & \text{for }x\in \mathbf{I}_{0},%
\end{array}%
\right.  \tag{WP}  \label{waveq}
\end{equation}%
where $\phi ^{0}=u^{0}$ and $\phi ^{1}=u^{1}-v\phi _{x}^{0}.$

Let us denote%
\begin{equation*}
\gamma _{v}:=\frac{1+v}{1-v},\text{\ \ }\gamma _{\eta }:=\frac{1+\eta }{%
1-\eta }\text{ \ and \ }\omega _{n}:=\left\{ 
\begin{array}{ll}
\displaystyle\frac{\left( 2n+1\right) }{2}i\pi -\frac{1}{2}\ln \gamma _{\eta
}, & \text{if }0\leq \eta <1,\smallskip  \\ 
\displaystyle ni\pi -\frac{1}{2}\ln \left\vert \gamma _{\eta }\right\vert ,
& \text{if }\eta >1.%
\end{array}%
\right. 
\end{equation*}%
Note that
\begin{itemize}
	\item $\ \gamma _{v}\geq 1$ for $0\leq v<1$,
	\item $\left\vert \gamma _{\eta
}\right\vert \geq 1$ and the real part of $\omega _{n}$
remains negative, for $\eta >0$ with $\eta \neq 1$.
\end{itemize}

As a first result, we derive a closed form of the solution of (\ref{waveq})
given by the series formulas%
\begin{equation}
\phi (x,t)=\sum_{n\in \mathbb{Z}}a_{n}\left( \gamma _{\eta }e^{\frac{1-v}{L}%
\omega _{n}(t+x)}+e^{\frac{1+v}{L}\omega _{n}(t-x)}\right) ,\text{ \ \ \ for 
}x\in \mathbf{I}_{t}\text{ and }t\geq 0,  \label{exact0}
\end{equation}%
which is a sum of two waves travelling in opposite directions. The
coefficients $a_{n}$ are explicitly given in function of the initial data $%
\phi ^{0}$ and $\phi ^{1},$ see Theorem \ref{thexist1}.

Next, we demonstrate how the series formulas (\ref{exact0}) can be used to
achieve the following results:

\begin{itemize}
\item \emph{For the undamped case, i.e. $\eta =0$}, we show that the
functional\footnote{%
Here and in the sequel, the subscript $v$ is used to emphasize the
dependence on the speed $v$.}%
\begin{equation}
\mathcal{E}_{v}\left( t\right) =\frac{1}{2}\int_{\mathbf{I}_{t}}\left( \phi
_{t}+v\phi _{x}\right) ^{2}+\left( 1-v^{2}\right) \phi _{x}^{2}dx,\text{ \ \
\ for }t\geq 0,  \label{QE}
\end{equation}%
depending on $L,t,v$ and the solution $\phi $ of (\ref{waveq}) is conserved
in time, see Theorem \ref{thcv}. Note that under the assumption (\ref{tlike}%
), the functional $\mathcal{E}_{v}$ is positive-definite and we will call it
the "energy" of the solution $\phi $.

\item \emph{For the damped case $\eta >0$ with $\eta \neq 1$}, the (usual)
energy%
\begin{equation}
E_{v}\left( t\right) =\frac{1}{2}\int_{\mathbf{I}_{t}}\phi _{t}^{2}\left(
x,t\right) +\phi _{x}^{2}\left( x,t\right) dx,\ \ \ \text{for }t\geq 0,
\label{E}
\end{equation}%
depending on $L,t,v$ and the solution of (\ref{waveq}) decays exponentially.
More precisely%
\begin{equation}
\frac{1}{\gamma _{\eta }^{2}\gamma _{v}}E_{v}\left( 0\right) e^{-\frac{%
1-v^{2}}{L}\ln \left\vert \gamma _{\eta }\right\vert t}\leq E_{v}\left(
t\right) \leq \gamma _{\eta }^{2}\gamma _{v}E_{v}\left( 0\right) e^{-\frac{%
1-v^{2}}{L}\ln \left\vert \gamma _{\eta }\right\vert t},\ \ \ \text{for }%
t\geq 0.  \label{exp-decay}
\end{equation}%
See Theorem \ref{th-stabl} and it corollaries for more sharper estimates. It follows in
particular that there is no decay to zero in finite time.
\end{itemize}

The exponential decay estimate given in (\ref{exp-decay}), with the sharp
rate and explicit constants, is new to the best to our knowledge. The
approach presented here relays on Fourier series and Parseval's identity, and
it does not involve semigroup theory as in \cite{LeMo1996,FuWW1999}. Even for a
non-travelling string, i.e. when $v=0,$ the precise estimate 
\begin{equation}
\frac{1}{\gamma _{\eta }^{2}}E_{0}\left( 0\right) e^{-\frac{1}{L}\ln
\left\vert \gamma _{\eta }\right\vert t}\leq E_{0}\left( t\right) \leq
\gamma _{\eta }^{2}E_{0}\left( 0\right) e^{-\frac{1}{L}\ln \left\vert \gamma
_{\eta }\right\vert t},\ \ \ \text{for }t\geq 0,  \label{estv=0}
\end{equation}%
seems to be not reported in the literature. The existing results assure only
that 
\begin{equation*}
E_{0}\left( t\right) \leq K E_{0}\left( 0\right) e^{-\frac{1}{L}\ln \left\vert \gamma _{\eta
}\right\vert t},
\end{equation*}
for a (non-explicitly given) positive constant $K$, see for instance \cite%
{Vese1988,CoZu1995,Cher1994,QuRu1977}.

For the special case $\eta =1$ and $0\leq v<1,$ the boundary condition at $%
x=vt$ reads 
\begin{equation*}
\phi _{x}\left( vt,t\right) -\phi _{t}\left( vt,t\right) =0.
\end{equation*}%
This is a transparent condition, i.e. there is no reflections of waves from
the boundary $x=vt$ and consequently all the initial disturbances leave the
interval $\left( vt,L+vt\right) $ at most after a time 
\begin{equation*}
T_{v}:=\frac{L}{1+v}+\frac{L}{1-v}=\frac{2L}{1-v^{2}}.
\end{equation*}
That is to say that the linear velocity feedback $\phi _{t}\left( vt,t\right) $ steers
the solution to the zero state in the finite time $T_{v}$. See for instance 
\cite{LeMo1996} for the case $0<v<1$ and \cite{Vese1988,CoZu1995} for $v=0$.

After the present introduction, we derive the exact solution and the\
expression for the coefficients of the series formula (\ref{exact0}). In
Section 3, we show that the energy $\mathcal{E}_{v}$ of the undamped
equation is constant in time. In the last section, we show the exponential
stability when $\eta >0$.

\section{Exact solution}

To compute the solution of Problem (\ref{waveq}), given by (\ref{exact0}),
we need to compute the coefficients $a_{n},n\in 
\mathbb{Z}
.$ To this end, we need to know the functions $\phi ^{0}$ and $\phi ^{1}$ on
an interval larger than $\mathbf{I}_{0}=\left( 0,L\right) $. As in \cite%
{Seng2020,GhSe2022}, we introduce%
\begin{equation*}
L_{2}:=\frac{2L}{1-v}
\end{equation*}%
and extend $\phi $ to the interval $\left( L+vt,L_{2}+vt\right) $ by setting%
\begin{equation}
\tilde{\phi}(x,t)=\left\{ 
\begin{array}{ll}
\phi \left( x,t\right) , & x\in \left( vt,L+vt\right) ,\smallskip \\ 
-\phi \left( \frac{1}{\gamma _{v}}\left( vt-x\right) +\frac{2L}{1+v}%
+vt,t\right) ,\text{ \ \ } & x\in \left( L+vt,L_{2}+vt\right) .%
\end{array}%
\right.  \label{phi0+}
\end{equation}%
The obtained function is well defined since the first variable of $\phi $
remains in the interval $\left( vt,L+vt\right) $. In particular, the
homogeneous boundary condition $\phi \left( L+vt,t\right) =0$ remains
satisfied, for every $t\geq 0$.

\begin{remark}
Clearly, $0<L\leq L_{2}/2\ $for $0\leq v<1.$ If $v=0,$ then $L_{2}=2L$ and
the function $\tilde{\phi}$ on $\left( L,2L\right) $ is an odd function on $%
\left( 0,2L\right) $, with respect to $x=L$. If $0<v<1,$ then $\phi $ is
extended as an odd function with an extra dilatation on the added interval $%
\left( L+vt,L_{2}+vt\right) $, see Figure \ref{fig2}. 
\begin{figure}[tbph]
\centering\includegraphics[width=0.67\textwidth]{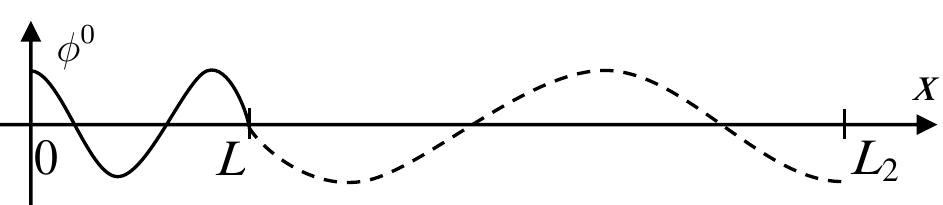}
\caption{Example of the extension of an initial data $\protect\phi ^{0}$
when $0<v<1$.}
\label{fig2}
\end{figure}
\end{remark}

Taking the derivative of (\ref{phi0+}) with respect to $x$, we obtain%
\begin{equation}
\tilde{\phi}_{x}(x,t)=\left\{ 
\begin{array}{ll}
\phi _{x}\left( x,t\right) , & x\in \left( vt,L+vt\right) ,\smallskip \\ 
\frac{1}{\gamma _{v}}\phi _{x}\left( \frac{1}{\gamma _{v}}\left( vt-x\right)
+\frac{2L}{1+v}+vt,t\right) ,\text{ \ \ \ \ } & x\in \left(
L+vt,L_{2}+vt\right) .%
\end{array}%
\right.  \label{phix+}
\end{equation}%
On the other hand, the time derivative is extended as follows%
\begin{equation}
\tilde{\phi}_{t}(x,t)=\left\{ 
\begin{array}{ll}
\phi _{t}\left( x,t\right) , & x\in \left( vt,L+vt\right) ,\smallskip \\ 
\frac{-1}{\gamma _{v}}\phi _{t}\left( \frac{1}{\gamma _{v}}\left(
vt-x\right) +\frac{2L}{1+v}+vt,t\right) ,\text{ \ \ \ \ } & x\in \left(
L+vt,L_{2}+vt\right) .%
\end{array}%
\right.  \label{phit+}
\end{equation}

Let us introduce the following family of Hilbert spaces%
\begin{equation*}
\mathcal{H}_{L+vt}\left( \mathbf{I}_{t}\right) :=\left\{ w\in H^{1}\left( 
\mathbf{I}_{t}\right) \text{, }w\left( L+vt\right) =0\right\} ,\ \ \ \text{\
for }t\geq 0
\end{equation*}%
and we assume that the initial data satisfies%
\begin{equation}
\phi ^{0}\in \mathcal{H}_{L}\left( \mathbf{I}_{0}\right) ,\text{ \ }\phi
^{1}\in L^{2}\left( \mathbf{I}_{0}\right) .  \label{ic}
\end{equation}

Now, we are ready to state the following existence and uniqueness result for
Problem (\ref{waveq}).

\begin{theorem}
\label{thexist1}Under the assumptions \emph{(\ref{tlike}) }and \emph{(\ref%
{ic}), }the solution of\emph{\ }Problem \emph{(\ref{waveq})}%
\begin{equation}
\phi \in C\left( [0,T];\mathcal{H}_{L+vt}\left( \mathbf{I}_{t}\right)
\right) \cap C^{1}\left( [0,T];L^{2}\left( \mathbf{I}_{t}\right) \right) ,
\label{regul}
\end{equation}%
\emph{\ }is given by the series \emph{(\ref{exact0})} where the coefficients 
$a_{n}\in 
\mathbb{C}
$ are given by the following formula 
\begin{equation}
a_{n}=\frac{1}{4\gamma _{\eta }\omega _{n}}\int\limits_{0}^{L_{2}}\left( 
\tilde{\phi}_{x}^{0}+\tilde{\phi}^{1}\right) e^{-\frac{1-v}{L}\omega _{n}x}dx%
\text{, \ \ \ for\ }n\in 
\mathbb{Z}
\   \label{an+}
\end{equation}%
and we have%
\begin{equation}
\sum_{n\in \mathbb{%
\mathbb{Z}
}}\left\vert \omega _{n}a_{n}\right\vert ^{2}=\frac{L}{8\left( 1-v\right)
\gamma _{\eta }^{2}}\int\limits_{vt}^{L_{2}+vt}e^{\frac{1-v}{L}\ln
\left\vert \gamma _{\eta }\right\vert (t+x)}\left( \tilde{\phi}_{x}+\tilde{%
\phi}_{t}\right) ^{2}dx<+\infty ,  \label{nan+}
\end{equation}%
where\emph{\ }$\tilde{\phi}_{x}^{0}$ and $\tilde{\phi}^{1}$ are extensions
of the initial data $\phi ^{0}$and\ $\phi ^{1}$ on the interval $\left(
0,L_{2}\right) $ given above by \emph{(\ref{phix+}) }and \emph{(\ref{phit+}) 
}respectively.
\end{theorem}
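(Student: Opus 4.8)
The plan is to construct the solution directly from the series (\ref{exact0}), verify term by term that it solves the wave equation and both boundary conditions, and then fix the coefficients $a_n$ by matching the initial data through a single exponential Fourier expansion on the enlarged interval $(0,L_2)$. \emph{Step 1 (the ansatz solves (\ref{waveq}) apart from the initial data).} Write the $n$-th summand as $\Phi_n(x,t)=\gamma_\eta e^{\frac{1-v}{L}\omega_n(t+x)}+e^{\frac{1+v}{L}\omega_n(t-x)}$. Being the sum of a function of $t+x$ and a function of $t-x$, each $\Phi_n$ solves $\phi_{tt}-\phi_{xx}=0$ automatically. The definition of $\omega_n$ is arranged so that $\gamma_\eta e^{2\omega_n}=-1$ in both regimes $0\le\eta<1$ and $\eta>1$; evaluating at $x=L+vt$ and factoring out $e^{-(1+v)\omega_n}e^{\frac{1-v^2}{L}\omega_n t}$ leaves the factor $\gamma_\eta e^{2\omega_n}+1=0$, so the Dirichlet condition $\Phi_n(L+vt,t)=0$ holds. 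At $x=vt$ both exponentials collapse to $e^{\frac{1-v^2}{L}\omega_n t}$, and a direct computation of $(1-\eta v)\partial_x\Phi_n-(\eta-v)\partial_t\Phi_n$ there reduces, using $(1-\eta)\gamma_\eta=1+\eta$, to a difference of two identical terms, hence vanishes. Thus every summand, and so the whole series, satisfies (\ref{waveq}) except possibly for the initial conditions.

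\emph{Step 2 (reduction to one Fourier series and the role of the extension).} Decompose $\phi=F(t+x)+G(t-x)$ with $F(\zeta)=\sum_n a_n\gamma_\eta e^{\frac{1-v}{L}\omega_n\zeta}$ and $G(\zeta)=\sum_n a_n e^{\frac{1+v}{L}\omega_n\zeta}$, so that $\phi_x+\phi_t=2F'(t+x)$. The Dirichlet condition at $x=L+vt$ forces $G(s)=-F(\gamma_v(s+L)+L)$, i.e.\ $G'$ is a reflected--dilated copy of $F'$; this is exactly the relation encoded in the extension (\ref{phi0+})--(\ref{phit+}). The crucial point is that $\tilde\phi_x^0+\tilde\phi^1$ equals $2F'$ not only on $(0,L)$, where it coincides with $\phi_x^0+\phi^1$, but on the whole of $(0,L_2)$: writing $y=-x/\gamma_v+2L/(1+v)$ for the inner argument in (\ref{phix+})--(\ref{phit+}) and inserting $G'=-\gamma_v F'(\gamma_v(\,\cdot\,+L)+L)$, one verifies the identity $\gamma_v(L-y)+L=x$, so that on $(L,L_2)$ the prefactor $1/\gamma_v$ of the extension exactly cancels the dilation factor $\gamma_v$ of $G'$, leaving $2F'(x)$. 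This is the step I expect to be the main obstacle, since it is here that the moving Dirichlet boundary and the dilation in (\ref{phi0+}) must conspire precisely; everything afterwards is orthogonality bookkeeping. Because $G$ is slaved to $F$ by this reflection, matching the single invariant $\phi_x+\phi_t=2F'$ on $(0,L)$ already recovers both $\phi^0$ and $\phi^1$: indeed $\phi_x^0-\phi^1=-2G'(-x)=2\gamma_v F'(\gamma_v(L-x)+L)$, whence $\phi(x,0)=F(x)-F(\gamma_v(L-x)+L)$ and $\phi_t(x,0)$ reproduce the prescribed data, with $\phi(L,0)=0$ automatic.

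\emph{Step 3 (coefficients, Parseval, and regularity).} Since consecutive $\omega_n$ differ by $i\pi$ and $\frac{1-v}{L}i\pi=\frac{2\pi i}{L_2}$, the family $\{e^{\frac{1-v}{L}\omega_n x}\}_{n\in\mathbb{Z}}$ is, up to the common weight $e^{-\frac{(1-v)\ln|\gamma_\eta|}{2L}x}$, the complete orthogonal exponential system of period $L_2$. Expanding $2F'=\tilde\phi_x^0+\tilde\phi^1\in L^2(0,L_2)$ in it and using $\int_0^{L_2}e^{\frac{1-v}{L}(\omega_m-\omega_n)x}\,dx=L_2\delta_{mn}$ isolates $a_n$; inserting $L_2=2L/(1-v)$ and the factor $\gamma_\eta\frac{1-v}{L}\omega_n$ produced by differentiating $F$ gives exactly (\ref{an+}). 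Applying Parseval to $\tilde\phi_x+\tilde\phi_t=2F'(t+x)$ at an arbitrary time $t$, with the weight $e^{\frac{1-v}{2L}\ln|\gamma_\eta|(t+x)}$ absorbing the real part of $\omega_n$, yields (\ref{nan+}); the right-hand side is finite at $t=0$ because $\phi^0\in H^1$, $\phi^1\in L^2$ by (\ref{ic}) and the extension preserves square-integrability. Finiteness of $\sum_n|\omega_n a_n|^2$ then bounds $\phi_x$ and $\phi_t$ in $L^2(\mathbf{I}_t)$, and the continuity in $t$ of the resulting $L^2$-valued series gives the regularity (\ref{regul}); uniqueness follows from completeness of the exponential system.
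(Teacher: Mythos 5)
Your proposal is correct: the termwise verification of the two boundary conditions (via $\gamma_\eta e^{2\omega_n}=-1$ and $(1-\eta)\gamma_\eta=1+\eta$), the identity $\tilde\phi_x+\tilde\phi_t=2F'(t+x)$ on the enlarged interval, the extraction of $a_n$ by the biorthogonality relation $\int_0^{L_2}e^{\frac{1-v}{L}(\omega_m-\omega_n)x}dx=L_2\delta_{mn}$, and the weighted Parseval identity all check out and do yield (\ref{an+}), (\ref{nan+}) and (\ref{regul}).

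The route, however, runs in the opposite direction from the paper's. The paper \emph{derives} the series rather than verifying it: starting from D'Alembert's decomposition $\phi=f(t+x)+g(t-x)$, it feeds both boundary conditions into a single functional equation $f'\left(\xi+\frac{2L}{1-v}\right)=-f'(\xi)/\gamma_\eta$, whose exponential solutions are precisely the frequencies $\omega_n$; the series (\ref{exact0}) is then the superposition of these modes, so the boundary conditions hold by construction and nothing like your Step 1 is needed. You instead take (\ref{exact0}) as an ansatz, check the boundary conditions termwise, and are therefore obliged to prove that the initial data are genuinely attained --- which you do correctly, by noting that the Dirichlet reflection slaves $G$ to $F$ through $G(s)=-F(\gamma_v(s+L)+L)$, so that matching the single invariant $\phi_x+\phi_t=2F'$ against the extended data on $(0,L_2)$ recovers $\phi^0$ and $\phi^1$ separately; this matching argument is implicit (essentially invisible) in the paper. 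From that point on the two proofs coincide: your extension identity is exactly the paper's unified expression (\ref{29}), and the orthogonality and Parseval steps are the same. What each buys: the paper's derivation explains where $\omega_n$ comes from, but leans on an informal ``superposition principle'' (that every solution of the functional equation is such a series); your verification makes existence self-contained and pushes all completeness issues into the initial-data matching, where only the standard $L^2(0,L_2)$-completeness of the weighted exponential system is invoked. Both arguments share the same level of informality about differentiating the series term by term and taking boundary traces, justified a posteriori by $\sum_n|\omega_n a_n|^2<+\infty$, and both treat uniqueness only in passing.
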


\begin{proof}
The general solution of (\ref{waveq}) is given by D'Alembert's formula 
\begin{equation*}
\phi (x,t)=f(t+x)+g\left( t-x\right) ,
\end{equation*}
where $f$ and $g$ are arbitrary continuous functions. Let us check the
boundary conditions. On one hand, at the left endpoint we have%
\begin{equation*}
\left( 1-\eta v\right) f^{\prime }(\left( 1+v\right) t)-g^{\prime }\left(
\left( 1-v\right) t\right) =\left( \eta -v\right) \left( f^{\prime }(\left(
1+v\right) t)+g^{\prime }\left( \left( 1-v\right) t\right) \right) .
\end{equation*}%
Setting $z=\left( 1-v\right) t$, we obtain%
\begin{equation}
\left( 1-\eta v-\eta +v\right) f^{\prime }\left( \gamma _{v}z\right) =\left(
1-\eta v+\eta -v\right) g^{\prime }\left( z\right) .  \label{+a1}
\end{equation}%
On the other hand, at the right endpoint, we infer that%
\begin{equation*}
f(\left( 1+v\right) t+L)+g\left( \left( 1-v\right) t-L\right) =0.
\end{equation*}%
Denoting $y=z-L$, we obtain 
\begin{equation}
f(\gamma _{v}y+\frac{2L}{1-v})=-g\left( y\right) .  \label{+b1}
\end{equation}%
Then, taking the derivative with respect to $y,$ we have 
\begin{equation}
\gamma _{v}f^{\prime }(\gamma _{v}y+\frac{2L}{1-v})=-g^{\prime }\left(
y\right) .  \label{+Db1}
\end{equation}%
Thanks to (\ref{+a1}), (\ref{+Db1}) and taking $\xi =\gamma _{v}y,$ we can
deduce that $f^{\prime }$\ satisfies 
\begin{equation}
\left( 1-\eta v+\eta -v\right) \gamma _{v}f^{\prime }\left( \xi +\frac{2L}{%
1-v}\right) =-\left( 1-\eta v-\eta +v\right) f^{\prime }\left( \xi \right) .
\end{equation}%
After few simplifications, this can be written as 
\begin{equation}
f^{\prime }\left( \xi +\frac{2L}{1-v}\right) =-\frac{f^{\prime }\left( \xi
\right) }{\gamma _{\eta }}.  \label{a+b}
\end{equation}

This formula suggests that $f^{\prime }(\xi )=e^{\beta \xi }$, for some
parameter $\beta $ to be determined later. Substituting the form $e^{\beta
\xi }$\ in (\ref{a+b}), we infer that $e^{\beta \left( \xi +\frac{2L}{1-v}%
\right) }=-e^{\beta \xi }/\gamma _{\eta }.$ Then we have:

$\bullet $ If $0\leq \eta <1$, then $\gamma _{\eta }\geq 1$ and we get 
\begin{equation*}
e^{\beta \left( \xi +\frac{2L}{1-v}\right) }=-e^{-\ln \gamma _{\eta
}}e^{\beta \xi }.
\end{equation*}%
Solving this equation for $\beta $, we obtain a sequence of values $\beta
_{n}=\left( 1-v\right) \omega _{n}/L,n\in 
\mathbb{Z}
,$ where 
\begin{equation*}
\omega _{n}=\frac{1}{2}\left( 2n+1\right) i\pi -\frac{1}{2}\ln \gamma _{\eta
}.
\end{equation*}

$\bullet $ If $\eta >1$, we have $\gamma _{\eta }<-1$ and we obtain another
sequence of values $\beta _{n}=\left( 1-v\right) \omega _{n}/L,n\in 
\mathbb{Z}
,$ where this time 
\begin{equation*}
\omega _{n}=ni\pi -\frac{1}{2}\ln \left\vert \gamma _{\eta }\right\vert .
\end{equation*}%
Note that in both cases we have $\ln \left\vert \gamma _{\eta }\right\vert
\geq 1$ and the real part of $\omega _{n}$ is negative.

Due to the superposition principal, it follows that $f$ can be written as%
\begin{equation*}
f^{\prime }\left( \xi \right) =\sum_{n\in \mathbb{Z}}c_{n}e^{\frac{1-v}{L}%
\omega _{n}\xi },\text{ \ \ \ \ }c_{n}\in 
\mathbb{C}
,
\end{equation*}%
where $c_{n}$ are\ complex coefficients to be determined later. The function 
$f$ can be written as%
\begin{equation*}
f\left( \xi \right) =c+\sum_{n\in \mathbb{Z}}\frac{Lc_{n}}{\left( 1-v\right)
\omega _{n}}e^{\frac{1-v}{L}\omega _{n}\xi },
\end{equation*}%
for some constant $c.$ Using (\ref{+b1}), we deduce that 
\begin{equation}
g\left( \xi \right) =-c+\sum_{n\in \mathbb{Z}}\frac{Lc_{n}}{\left(
1-v\right) \gamma _{\eta }\omega _{n}}e^{\frac{1+v}{L}\omega _{n}\xi },
\label{g}
\end{equation}%
where we have used the fact that $e^{2\omega _{n}}=-1/\gamma _{\eta }$
whether $0\leq \eta <1\ $or $\eta >1.$

Thanks to D'Alembert's formula, the solution of Problem (\ref{waveq}) is
given by the series 
\begin{equation}
\phi (x,t)=\sum_{n\in \mathbb{Z}}\frac{L}{\left( 1-v\right)\gamma _{\eta }
\omega _{n}}c_{n}\left( \gamma _{\eta }e^{\frac{1-v}{L}\omega _{n}(t+x)}+e^{%
\frac{1+v}{L}\omega _{n}(t-x)}\right) .  \label{phi_cn}
\end{equation}%
To obtain (\ref{exact0}), we set%
\begin{equation}
a_{n}:=\frac{L}{\left( 1-v\right)\gamma _{\eta } \omega _{n}}c_{n}.
\label{an}
\end{equation}%
The coefficient $c_{n}$ are determined as follows. Going back to (\ref%
{phi_cn}), we infer that%
\begin{eqnarray}
\phi _{x}(x,t) &=&\sum_{n\in 
\mathbb{Z}
}c_{n}\left( e^{\frac{1-v}{L}\omega _{n}(t+x)}-\frac{\gamma _{v}}{\gamma
_{\eta }}e^{\frac{1+v}{L}\omega _{n}(t-x)}\right) ,\smallskip   \label{ph x}
\\
\phi _{t}(x,t) &=&\sum_{n\in 
\mathbb{Z}
}c_{n}\left( e^{\frac{1-v}{L}\omega _{n}(t+x)}+\frac{\gamma _{v}}{\gamma
_{\eta }}e^{\frac{1+v}{L}\omega _{n}\text{ }(t-x)}\right) ,  \label{ph t}
\end{eqnarray}%
for $x\in \left( vt,L+vt\right) $ and $t\geq 0$. It follows from (\ref{phix+}%
) and (\ref{phit+}) that the extensions $\tilde{\phi}_{x}$ and $\tilde{\phi}%
_{t}$ are given by%
\begin{eqnarray}
\tilde{\phi}_{x}(x,t)=\left\{ 
\begin{array}{l}
\displaystyle\sum_{n\in 
\mathbb{Z}
}c_{n}\left( e^{\frac{1-v}{L}\omega _{n}(t+x)}-\frac{\gamma _{v}}{\gamma
_{\eta }}e^{\frac{1+v}{L}\omega _{n}(t-x)}\right) ,\text{\ \ if }x\in \left(
vt,L+vt\right) ,\smallskip  \\ 
\frac{1}{\gamma _{v}}\displaystyle\sum_{n\in 
\mathbb{Z}
}c_{n}\left( e^{\frac{1-v}{L}\omega _{n}(\left( 1+v\right) t+\frac{vt-x}{%
\gamma _{v}}+\frac{2L}{1+v})}\right.  \\ 
\text{ \ \ \ \ \ \ \ \ }-\displaystyle\frac{\gamma _{v}}{\gamma _{\eta
}}\left. e^{\frac{1+v}{L}\omega _{n}(\left( 1-v\right) t-\frac{vt-x}{\gamma
_{v}}-\frac{2L}{1+v})}\right) ,\text{\ \ if }x\in \left(
L+vt,L_{2}+vt\right) ,%
\end{array}%
\right.   \label{phx}\smallskip \\
\tilde{\phi}_{t}(x,t)=\left\{ 
\begin{array}{l}
\displaystyle\sum_{n\in 
\mathbb{Z}
}c_{n}\left( e^{\frac{1-v}{L}\omega _{n}(t+x)}+\frac{\gamma _{v}}{\gamma
_{\eta }}e^{\frac{1+v}{L}\omega _{n}(t-x)}\right) ,\text{ \ if }x\in \left(
vt,L+vt\right) ,\smallskip  \\ 
\frac{-1}{\gamma _{v}}\displaystyle\sum_{n\in 
\mathbb{Z}
}c_{n}\left( e^{\frac{1-v}{L}\omega _{n}(\left( 1+v\right) t+\frac{vt-x}{%
\gamma _{v}}+\frac{2L}{1+v})}\right.  \\ 
\text{ \ \ \ \ \ \ \ \ }\displaystyle+\frac{\gamma _{v}}{\gamma _{\eta }}%
\displaystyle\left. e^{\frac{1+v}{L}\omega _{n}(\left( 1-v\right) t-\frac{%
vt-x}{\gamma _{v}}-\frac{2L}{1+v})}\right) ,\text{\ \ if }x\in \left(
L+vt,L_{2}+vt\right) .%
\end{array}%
\right.   \label{pht}
\end{eqnarray}%
Taking the sum of (\ref{phx}) and (\ref{pht}), we get%
\begin{equation*}
\tilde{\phi}_{x}+\tilde{\phi}_{t}=\left\{ 
\begin{array}{ll}
2\displaystyle\sum_{n\in 
\mathbb{Z}
}c_{n}e^{\frac{1-v}{L}\omega _{n}(t+x)}\smallskip , & x\in \left(
vt,L+vt\right) ,\smallskip  \\ 
\frac{-2}{\gamma _{v}}\displaystyle\sum_{n\in 
\mathbb{Z}
}c_{n}\frac{\gamma _{v}}{\gamma _{\eta }}e^{-2\omega _{n}}e^{\frac{1-v}{L}%
\omega _{n}(t+x)}, & x\in \left( L+vt,L_{2}+vt\right) .%
\end{array}%
\right. 
\end{equation*}%
Since $e^{-2\omega _{n}}=-\gamma _{\eta }$, then we have the unified
expression%
\begin{equation}
\tilde{\phi}_{x}+\tilde{\phi}_{t}=2\sum_{n\in 
\mathbb{Z}
}c_{n}e^{\frac{1-v}{L}\omega _{n}(t+x)},\ \ \ \text{for }x\in \left(
vt,L_{2}+vt\right) \text{ and }t\geq 0.  \label{29}
\end{equation}%
Using the definition of $\omega _{n},$ we get%
\begin{equation}
\tilde{\phi}_{x}+\tilde{\phi}_{t}=\left\{ 
\begin{array}{ll}
\displaystyle2e^{\frac{1-v}{2L}\left( i\pi -\ln \gamma _{\eta }\right)
(t+x)}\sum_{n\in 
\mathbb{Z}
}c_{n}e^{\frac{1-v}{L}ni\pi (t+x)}\smallskip , & \text{if }0\leq \eta <1, \\ 
\displaystyle2e^{-\frac{1-v}{2L}\ln \left\vert \gamma _{\eta }\right\vert
(t+x)}\sum_{n\in 
\mathbb{Z}
}c_{n}e^{\frac{1-v}{L}ni\pi (t+x)}, & \text{if }\eta >1.%
\end{array}%
\right.   \label{29+}
\end{equation}%
Taking into account that $\left\{ e^{\frac{n\pi
i\left( 1-v\right) }{L}\left( t+x\right) }/\sqrt{L_{2}}\right\} _{n\in \mathbb{%
\mathbb{Z}
}}$ is an orthonormal basis for $L^{2}\left( vt,L_{2}+vt\right) $, for every 
$t\geq 0$, we rewrite (\ref{29}) as 
\begin{equation}
\sum_{n\in 
\mathbb{Z}
}c_{n}\frac{e^{\frac{1-v}{L}ni\pi (t+x)}}{\sqrt{L_{2}}}=\left\{ 
\begin{array}{ll}
\displaystyle\frac{1}{2\sqrt{L_{2}}}e^{-\frac{1-v}{2L}\left( i\pi -\ln
\gamma _{\eta }\right) (t+x)}\left( \tilde{\phi}_{x}+\tilde{\phi}_{t}\right)
\smallskip , & \text{if }0\leq \eta <1, \\ 
\displaystyle\frac{1}{2\sqrt{L_{2}}}e^{\frac{1-v}{2L}\ln \left\vert \gamma
_{\eta }\right\vert (t+x)}\left( \tilde{\phi}_{x}+\tilde{\phi}_{t}\right) ,
& \text{if }\eta >1.%
\end{array}%
\right.   \label{30}
\end{equation}%
By consequence,%
\begin{equation*}
c_{n}=\left\{ 
\begin{array}{ll}
\displaystyle\frac{1}{2L_{2}}\int\limits_{vt}^{L_{2}+vt}e^{-\frac{1-v}{2L}%
\left( i\pi -\ln \gamma _{\eta }\right) (t+x)}\left( \tilde{\phi}_{x}+\tilde{%
\phi}_{t}\right) e^{-\frac{n\pi i\left( 1-v\right) }{L}\left( t+x\right)
}dx\smallskip , & \text{if }0\leq \eta <1, \\ 
\displaystyle\frac{1}{2L_{2}}\int\limits_{vt}^{L_{2}+vt}e^{\frac{1-v}{2L}\ln
\left\vert \gamma _{\eta }\right\vert (t+x)}\left( \tilde{\phi}_{x}+\tilde{%
\phi}_{t}\right) e^{-\frac{n\pi i\left( 1-v\right) }{L}\left( t+x\right) }dx,
& \text{if }\eta >1,%
\end{array}%
\right. 
\end{equation*}%
for\ $n\in 
\mathbb{Z}
$. Whether $0\leq \eta <1$ or $\eta >1,$ in both cases, we have%
\begin{equation*}
c_{n}=\frac{1}{2L_{2}}\int\limits_{vt}^{L_{2}+vt}\left( \tilde{\phi}_{x}+%
\tilde{\phi}_{t}\right) e^{-\frac{\left( 1-v\right) }{L}\omega _{n}\left(
t+x\right) }dx\text{, \ \ \ for\ }n\in 
\mathbb{Z}
\text{.}
\end{equation*}%
For $t=0$ and tacking (\ref{an})\ into account, we obtain (\ref{an+}) as
claimed.

Moreover, as a consequence of Parseval's equality, it comes that%
\begin{equation*}
\sum_{n\in 
\mathbb{Z}
}\left\vert c_{n}\right\vert ^{2}=\left\{ 
\begin{array}{ll}
\displaystyle\frac{1}{4L_{2}}\int\limits_{vt}^{L_{2}+vt}\left\vert e^{-\frac{%
1-v}{2L}\left( i\pi -\ln \gamma _{\eta }\right) (t+x)}\right\vert ^{2}\left( 
\tilde{\phi}_{x}+\tilde{\phi}_{t}\right) ^{2}dx\smallskip , & \text{if }%
0\leq \eta <1, \\ 
\displaystyle\frac{1}{4L_{2}}\int\limits_{vt}^{L_{2}+vt}\left\vert e^{\frac{%
1-v}{2L}\ln \left\vert \gamma _{\eta }\right\vert (t+x)}\right\vert
^{2}\left( \tilde{\phi}_{x}+\tilde{\phi}_{t}\right) ^{2}dx, & \text{if }\eta
>1.%
\end{array}%
\right.
\end{equation*}%
Whether $0\leq \eta <1$ or $\eta >1,$ it follows that%
\begin{equation*}
\sum_{n\in 
\mathbb{Z}
}\left\vert \omega _{n}a_{n}\right\vert ^{2}=\frac{L^{2}}{\gamma _{\eta
}^{2}\left( 1-v\right) ^{2}}\sum_{n\in 
\mathbb{Z}
}\left\vert c_{n}\right\vert ^{2}=\frac{L}{8\gamma _{\eta }^{2}\left(
1-v\right) }\int\limits_{vt}^{L_{2}+vt}e^{\frac{1-v}{L}\ln \left\vert \gamma
_{\eta }\right\vert (t+x)}\left( \tilde{\phi}_{x}+\tilde{\phi}_{t}\right)
^{2}dx.
\end{equation*}%
Thanks to (\ref{ic}), $\phi ^{0}$ and\ $\phi ^{1}$ belongs to $%
L^{2}\left( 0,L_{2}\right) $. Thus, the integral at the right hand side for $%
t=0$ is finite and 
\begin{equation*}
\sum_{n\in 
\mathbb{Z}
}\left\vert \omega _{n}a_{n}\right\vert ^{2}<+\infty .
\end{equation*}%
Recalling that $\left\vert w_{n}\right\vert ^{2}=O\left( n^{2}\right) $, for
large values of $n,$ then%
\begin{equation}
\sum_{n\in \mathbb{Z}}\left\vert na_{n}\right\vert ^{2}<+\infty .
\label{nan}
\end{equation}

Let $T>0$ and $t\in \left[ 0,T\right] .$ Due to the continuity of the
exponential function, we get%
\begin{equation*}
\left\vert a_{n}\left( \gamma _{\eta }e^{\frac{1-v}{L}\omega _{n}(t+x)}+e^{%
\frac{1+v}{L}\omega _{n}(t-x)}\right) \right\vert \leq C_{T}\left\vert
a_{n}\right\vert ,
\end{equation*}%
where $C_{T}$ is a constant depending only on $v,\eta ,L$ and $T.$

Going back to (\ref{ph x}), (\ref{ph t}) and due to (\ref{an}), we can check
that 
\begin{equation*}
\left\vert c_{n}\left( e^{\frac{1-v}{L}\omega _{n}(t+x)}\pm \frac{\gamma _{v}%
}{\gamma _{\eta }}e^{\frac{1+v}{L}\omega _{n}(t-x)}\right) \right\vert \leq
C_{T}^{\prime }\left\vert na_{n}\right\vert ,
\end{equation*}%
for some constant $C_{T}^{\prime }$.

Taking (\ref{nan}) into account, we infer that $\phi (x,t)$, $\phi _{x}(x,t)$
and $\phi _{t}(x,t)$ belong to $L^{2}\left( \mathbf{I}_{t}\right) $, for $%
t\geq 0$. In particular, $\phi (x,t)\in \mathcal{H}_{L+vt}\left( \mathbf{I}%
_{t}\right) $, for $t\geq 0$. The continuity in time of $\phi $ and $\phi
_{t}$ as functions of $t$ with values in $\mathcal{H}_{L+vt}\left( \mathbf{I}%
_{t}\right) $ and $L^{2}\left( \mathbf{I}_{t}\right) $, respectively,
follows as they are the sums of uniformly converging series of continuous
functions. This shows (\ref{regul}).
\end{proof}

\section{A conserved quantity for the string with no damper}

For the undamped case, i.e. $\eta =0$ in Problem (\ref{waveq}), we show that
the energy $\mathcal{E}_{v}$ given by (\ref{QE}) is conserved in time.

\begin{theorem}
\label{thcv}Under the assumptions \emph{(\ref{tlike})} and \emph{(\ref{ic}),}
the solution of Problem \emph{(\ref{waveq})} satisfies%
\begin{equation}
\mathcal{E}_{v}\left( t\right) =\frac{\pi ^{2}\left( 1-v^{2}\right) }{2L}%
\sum_{n\in \mathbb{%
\mathbb{Z}
}}\left\vert \left( 2n+1\right) a_{n}\right\vert ^{2},\ \ \ \ \ \text{for\ }%
t\geq 0,  \label{con-qua}
\end{equation}%
where the left hand side is independent of $t.$
\end{theorem}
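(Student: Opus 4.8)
The plan is to specialize to $\eta=0$, where $\gamma _{\eta}=1$ and $\omega _{n}=\tfrac{(2n+1)}{2}i\pi$ is purely imaginary, and to reduce the energy integral to a single Parseval sum. First I would write the solution through its d'Alembert pieces $\phi (x,t)=f(t+x)+g(t-x)$, so that the Riemann invariants are $\phi _{t}+\phi _{x}=2f^{\prime }(t+x)$ and $\phi _{t}-\phi _{x}=2g^{\prime }(t-x)$, with $f^{\prime }(\xi )=\sum _{n}c_{n}e^{\frac{1-v}{L}\omega _{n}\xi }$ as in the proof of Theorem \ref{thexist1}. The first key step is the purely algebraic observation that the integrand of (\ref{QE}) factors through these invariants: writing $R=\phi _{t}+\phi _{x}$ and $S=\phi _{t}-\phi _{x}$, one checks that $(\phi _{t}+v\phi _{x})^{2}+(1-v^{2})\phi _{x}^{2}=\tfrac{1}{2}[(1+v)R^{2}+(1-v)S^{2}]$, the cross term $RS$ cancelling thanks to the $(1-v^{2})$ weight. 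Hence $\mathcal{E}_{v}(t)=\tfrac{1}{4}\int _{vt}^{L+vt}[(1+v)R^{2}+(1-v)S^{2}]\,dx$, a clean split into a right-moving and a left-moving contribution.

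The main obstacle is that the physical interval $\mathbf{I}_{t}$ has length $L$, whereas the exponential system generated by the $\omega _{n}$ is orthogonal only over an interval of length $L_{2}=\tfrac{2L}{1-v}$; integrating $R^{2}$ term by term over $(vt,L+vt)$ leaves surviving cross terms. I would overcome this exactly as the extension (\ref{phi0+}) is designed to: the right-endpoint relation (\ref{+Db1}), $g^{\prime }(y)=-\gamma _{v}f^{\prime }(\gamma _{v}y+\tfrac{2L}{1-v})$, lets me change variables in the $S$-integral and rewrite it as an integral of $[f^{\prime }]^{2}$ over the complementary block $((1+v)t+L,(1+v)t+L_{2})$. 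Concretely, since $(1-v)\gamma _{v}=1+v$, the substitution $\xi =\gamma _{v}(t-x)+\tfrac{2L}{1-v}$ turns $\tfrac{1}{4}\int _{vt}^{L+vt}(1-v)S^{2}\,dx$ into $(1+v)\int _{(1+v)t+L}^{(1+v)t+L_{2}}[f^{\prime }(\xi )]^{2}\,d\xi $, which abuts the right-moving piece $(1+v)\int _{(1+v)t}^{(1+v)t+L}[f^{\prime }(\xi )]^{2}\,d\xi $. Equivalently, this is the statement $\mathcal{E}_{v}(t)=\tfrac{1+v}{4}\int _{vt}^{L_{2}+vt}(\tilde{\phi }_{x}+\tilde{\phi }_{t})^{2}\,dx$ expressed through the extended field of (\ref{phix+}) and (\ref{phit+}). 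The two blocks glue into a single integral over one full period of length $L_{2}$:
\[
\mathcal{E}_{v}(t)=(1+v)\int _{(1+v)t}^{(1+v)t+L_{2}}[f^{\prime }(\xi )]^{2}\,d\xi .
\]

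Finally, on an interval of length $L_{2}$ the functions $e^{\frac{1-v}{L}\omega _{n}\xi }=e^{\frac{(2n+1)(1-v)\pi i}{2L}\xi }$ form, after the $1/\sqrt{L_{2}}$ normalisation recorded before (\ref{30}), an orthonormal system, so Parseval's identity gives $\int _{(1+v)t}^{(1+v)t+L_{2}}|f^{\prime }|^{2}\,d\xi =L_{2}\sum _{n}|c_{n}|^{2}$; since $R=2f^{\prime }$ is real, the square and the modulus-square coincide. Thus $\mathcal{E}_{v}(t)=(1+v)L_{2}\sum _{n}|c_{n}|^{2}$, which is already independent of $t$ because the Parseval sum does not depend on the location of the period. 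It then remains only to substitute $c_{n}=\tfrac{(1-v)\omega _{n}}{L}a_{n}$ from (\ref{an}) with $\gamma _{\eta }=1$ and $|\omega _{n}|^{2}=\tfrac{(2n+1)^{2}\pi ^{2}}{4}$; after simplifying $(1+v)L_{2}\cdot \tfrac{(1-v)^{2}}{L^{2}}\cdot \tfrac{(2n+1)^{2}\pi ^{2}}{4}=\tfrac{\pi ^{2}(1-v^{2})}{2L}(2n+1)^{2}$ one obtains exactly (\ref{con-qua}). The only points needing care are the reality of $f^{\prime }$ (so that $\int [f^{\prime }]^{2}=\int |f^{\prime }|^{2}$) and the orientation bookkeeping in the change of variables; convergence of all the series is guaranteed by (\ref{nan+}).
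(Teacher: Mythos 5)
Your proposal is correct and is essentially the paper's own proof run in the opposite direction: the weighted invariant identity $(\phi_t+v\phi_x)^2+(1-v^2)\phi_x^2=\tfrac12\left[(1+v)R^2+(1-v)S^2\right]$, the reflection change of variables $\xi=\gamma_v(t-x)+\tfrac{2L}{1-v}$ linking the two blocks of length $L$ and $L_2-L$, and Parseval over a window of length $L_2$ are precisely the ingredients the paper uses, except that the paper starts from the already-established identity (\ref{nan+}) with $\eta=0$ and splits the extended integral back onto $\mathbf{I}_t$, whereas you start from $\mathcal{E}_v(t)$ and build up to the Parseval sum, re-deriving (\ref{nan+}) in terms of $f'$ along the way. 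Both arguments are the same computation, and your constant bookkeeping checks out.
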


\begin{proof}
If $\eta =0,$ then $\omega _{n}=\left( 2n+1\right) i\pi /2$ and the identity
(\ref{nan+}) becomes%
\begin{equation}
\frac{1}{1-v}\int\limits_{vt}^{L_{2}+vt}\left( \tilde{\phi}_{x}+\tilde{\phi}%
_{t}\right) ^{2}dx=\frac{2\pi ^{2}}{L}\sum_{n\in \mathbb{%
\mathbb{Z}
}}\left\vert \left( 2n+1\right) a_{n}\right\vert ^{2}.  \label{eq-per}
\end{equation}%
Using the extensions (\ref{phix+}), (\ref{phit+}) and considering the change
of variable 
\begin{equation*}
x=\gamma _{v}\left( vt-\xi \right) +\frac{2L}{1-v}+vt,
\end{equation*}
in $\left( L+vt,L_{2}+vt\right) ,$ then we have 
\begin{equation*}
\frac{1}{1-v}\int\limits_{L+vt}^{L_{2}+vt}\left( \tilde{\phi}_{x}\left(
x,t\right) +\tilde{\phi}_{t}\left( x,t\right) \right) ^{2}dx=\frac{1}{1+v}%
\int_{\mathbf{I}_{t}}\left( \phi _{x}\left( \xi ,t\right) -\phi _{t}\left(
\xi ,t\right) \right) ^{2}d\xi .
\end{equation*}%
Taking (\ref{eq-per}) into account,\ it comes that 
\begin{eqnarray*}
\frac{1}{1-v}\int\limits_{vt}^{L_{2}+vt}\left( \tilde{\phi}_{x}+\tilde{\phi}%
_{t}\right) ^{2}dx &=&\frac{1}{1-v}\int_{\mathbf{I}_{t}}\left( \phi
_{t}+\phi _{x}\right) ^{2}dx+\frac{1}{1+v}\int_{\mathbf{I}_{t}}\left( \phi
_{x}-\phi _{t}\right) ^{2}dx \\
&=&\frac{2\pi ^{2}}{L}\sum_{n\in \mathbb{%
\mathbb{Z}
}}\left\vert \left( 2n+1\right) a_{n}\right\vert ^{2}.
\end{eqnarray*}%
Expanding $\left( \phi _{x}\pm \phi _{t}\right) ^{2}$ and collecting similar
terms, we get 
\begin{equation}
\frac{1}{1-v^{2}}\left( \int_{\mathbf{I}_{t}}\phi _{x}^{2}+\phi
_{t}^{2}+2v\phi _{x}\phi _{t}dx\right) =\frac{\pi ^{2}}{L}\sum_{n\in \mathbb{%
\mathbb{Z}
}}\left\vert \left( 2n+1\right) a_{n}\right\vert ^{2},\text{ \ \ for}\ t\geq
0.  \label{est}
\end{equation}%
The left hand side is equal to $2\mathcal{E}_{v}\left( t\right) /\left(
1-v^{2}\right) $ and (\ref{con-qua}) follows.
\end{proof}

\begin{remark}
Using Leibnitz's rule for differentiation
under the integral sign, we can check directly that $\frac{d}{dt}\mathcal{E}%
_{v}\left( t\right) =0$, see the appendix.
\end{remark}
\begin{remark}
The energy expression $\mathcal{E}%
_{v}\left( t\right)$ is also shown to be conserved in time for the Dirichlet boundary conditions at both ends, see \cite{GhSe2022}.
\end{remark}
Let us now compare $\mathcal{E}_{v}\left( t\right) $ to the usual expression
of energy $E_{v}\left( t\right) $ for the wave equation

\begin{corollary}
\label{coro3}Under the assumptions \emph{(\ref{tlike}) }and \emph{(\ref{ic}),%
} the energy $E_{v}\left( t\right) $ of the solution of undamped Problem 
\emph{(\ref{waveq})} satisfies%
\begin{equation}
\frac{\mathcal{E}_{v}\left( 0\right) }{1+v}\leq E_{v}\left( t\right) \leq 
\frac{\mathcal{E}_{v}\left( 0\right) }{1-v},\ \ \ \ \ for\ t\geq 0
\label{ES0}
\end{equation}%
and%
\begin{equation}
\frac{1}{\gamma _{v}}E_{v}\left( 0\right) \leq E_{v}\left( t\right) \leq
\gamma _{v}E_{v}\left( 0\right) ,\ \ \ \ \ \text{\ for }t\geq 0.
\label{stab}
\end{equation}
\end{corollary}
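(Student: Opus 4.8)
The plan is to sandwich the usual energy $E_v(t)$ between constant multiples of the conserved functional $\mathcal{E}_v(t)$, and then invoke the conservation law $\mathcal{E}_v(t)=\mathcal{E}_v(0)$ supplied by Theorem \ref{thcv}. The only analytic input needed is the elementary pointwise inequality $|2v\phi_x\phi_t|\leq v(\phi_x^2+\phi_t^2)$, which is available precisely because $0\leq v<1$ (and in particular $v\geq 0$); everything else is bookkeeping.

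First I would expand the integrand of $\mathcal{E}_v$, observing that $(\phi_t+v\phi_x)^2+(1-v^2)\phi_x^2=\phi_t^2+\phi_x^2+2v\phi_x\phi_t$, so that
\begin{equation*}
2\mathcal{E}_v(t)=\int_{\mathbf{I}_t}\left(\phi_t^2+\phi_x^2\right)dx+2v\int_{\mathbf{I}_t}\phi_x\phi_t\,dx .
\end{equation*}
Applying $-v(\phi_x^2+\phi_t^2)\leq 2v\phi_x\phi_t\leq v(\phi_x^2+\phi_t^2)$ pointwise and integrating over $\mathbf{I}_t$, the cross term is dominated by the diagonal one, which yields
\begin{equation*}
(1-v)\,E_v(t)\leq \mathcal{E}_v(t)\leq (1+v)\,E_v(t),\qquad t\geq 0 .
\end{equation*}
Since $\mathcal{E}_v(t)=\mathcal{E}_v(0)$ for all $t$ by Theorem \ref{thcv}, dividing through gives exactly the chain (\ref{ES0}).

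For (\ref{stab}) I would specialize the two-sided bound just obtained to $t=0$, namely $(1-v)E_v(0)\leq \mathcal{E}_v(0)\leq (1+v)E_v(0)$, and substitute these into (\ref{ES0}). The upper estimate then reads $E_v(t)\leq \mathcal{E}_v(0)/(1-v)\leq (1+v)E_v(0)/(1-v)=\gamma_v E_v(0)$, and symmetrically $E_v(t)\geq \mathcal{E}_v(0)/(1+v)\geq (1-v)E_v(0)/(1+v)=E_v(0)/\gamma_v$. I do not expect any genuine obstacle: the argument is purely algebraic once conservation is granted, and the only points demanding care are keeping the direction of each inequality consistent and using $v\geq 0$ so that $v$, rather than $|v|$, multiplies the diagonal term in the key pointwise bound.
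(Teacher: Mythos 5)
Your proof is correct. Note that the paper itself gives no self-contained argument for Corollary \ref{coro3}: its entire proof is a pointer to Corollary 2 of \cite{GhSe2022}, and what you have written is precisely the standard argument that reference invokes — expand the integrand of $\mathcal{E}_v$ to expose the cross term $2v\phi_x\phi_t$, bound it pointwise by $v(\phi_x^2+\phi_t^2)$ to get the sandwich $(1-v)E_v(t)\leq\mathcal{E}_v(t)\leq(1+v)E_v(t)$, and then use the conservation $\mathcal{E}_v(t)=\mathcal{E}_v(0)$ from Theorem \ref{thcv} to obtain (\ref{ES0}), with (\ref{stab}) following by specializing the sandwich to $t=0$ and chaining. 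All inequality directions are handled consistently, and the hypothesis $0\leq v<1$ is used exactly where it is needed (so that $v\geq 0$ multiplies the diagonal term and $1\pm v>0$ permits division), so your write-up actually makes the corollary self-contained where the paper leaves it to an external citation.
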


\begin{proof}
It suffices to argue as in the proof of Corollary 2 in \cite{GhSe2022}.
\end{proof}

\begin{remark}
The solution $\phi $ given by (\ref{exact0}), with\emph{\ }$\eta =0,$\emph{\ 
}satisfies the periodicity relation%
\begin{equation}
\phi (x+vT_{v},t+T_{v})=-\phi (x,t)\text{, \ \ \ }t\geq 0.  \label{T-period}
\end{equation}%
It follows in particular that the energy $E_{v}$ is a $T_{v}-$periodic
function in time.
\end{remark}

\begin{remark}
\label{rmksharp}The equality in (\ref{ES0}) holds at least if $\phi
_{t}\left( x,t_{0}\right) =\pm \phi _{x}\left( x,t_{0}\right) $, for $x\in 
\mathbf{I}_{t_{0}}$ and some $t_{0}\geq 0$. Indeed, we have%
\begin{equation*}
\mathcal{E}_{v}\left( t_{0}\right) =E_{v}\left( t_{0}\right) \pm
v\int_{vt_{0}}^{L+vt_{0}}\phi _{x}\left( x,t_{0}\right) \phi _{t}\left(
x,t_{0}\right) \ dx=\left( 1\pm v\right) E_{v}\left( t_{0}\right) ,
\end{equation*}%
i.e. $E_{v}\left( t_{0}\right) =\mathcal{E}_{v}\left( t_{0}\right) /\left(
1\pm v\right) $. The + and -- signs are used respectively.
\end{remark}

\begin{remark}
Let $0<v<1$. Arguing as in \cite{GhSe2022,Seng2020}, we can show the identity%
\begin{equation}
\frac{1}{v^{2}}\int\limits_{0}^{T_{v}}\phi _{x}^{2}(vt,t)dt=\frac{\mathcal{E}%
_{v}\left( 0\right) }{\left( 1-v^{2}\right) ^{2}}=\int\limits_{0}^{T_{v}}%
\phi _{x}^{2}(L+vt,t)dt.
\end{equation}%
Then, we can derive exact boundary observability results at each endpoint for $T\geq
T_{v}=2L/\left( 1-v^{2}\right) $.
\end{remark}

\section{Exponential stability for the string with a boundary damper}

In this section, we keep $0\leq v<1$ and assume that $\eta >0$.

\begin{theorem}
\label{th-stabl}Under the assumptions \emph{(\ref{tlike})} and \emph{(\ref%
{ic}),} the solution of Problem \emph{(\ref{waveq})} satisfies%
\begin{multline}
\frac{1}{1+v}\int_{\mathbf{I}_{t}}e^{\frac{1+v}{L}\ln \left\vert \gamma
_{\eta }\right\vert (t-x)}\left( \phi _{x}-\phi _{t}\right) ^{2}dx
\label{Sl} \\
+\frac{1}{\gamma _{\eta }^{2}\left( 1-v\right) }\int_{\mathbf{I}_{t}}e^{%
\frac{1-v}{L}\ln \left\vert \gamma _{\eta }\right\vert (t+x)}\left( \phi
_{t}+\phi _{x}\right) ^{2}dx=\frac{8}{L}\sum_{n\in \mathbb{%
\mathbb{Z}
}}\left\vert \omega _{n}a_{n}\right\vert ^{2},
\end{multline}%
where the left hand side is finite and independent of $t$. Moreover, it
holds that%
\begin{equation}
M_{1}e^{-\frac{1-v^{2}}{L}\ln \left\vert \gamma _{\eta }\right\vert t}\leq
E_{v}\left( t\right) \leq M_{2}e^{-\frac{1-v^{2}}{L}\ln \left\vert \gamma
_{\eta }\right\vert t},\text{ for}\ t\geq 0,  \label{est0}
\end{equation}%
where 
\begin{eqnarray*}
M_{1} &:&=\frac{2}{L}\min \left( 1+v,\left\vert \gamma _{\eta }\right\vert
^{\left( 1+v\right) }\left( 1-v\right) \right) \sum_{n\in \mathbb{%
\mathbb{Z}
}}\left\vert \omega _{n}a_{n}\right\vert ^{2}, \\
M_{2} &:&=\frac{2}{L}\max \left( \left\vert \gamma _{\eta }\right\vert
^{\left( 1+v\right) }\left( 1+v\right) ,\gamma _{\eta }^{2}\left( 1-v\right)
\right) \sum_{n\in \mathbb{%
\mathbb{Z}
}}\left\vert \omega _{n}a_{n}\right\vert ^{2}.
\end{eqnarray*}
\end{theorem}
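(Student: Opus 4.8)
The plan is to derive the identity (\ref{Sl}) by the same splitting-and-folding argument used for Theorem \ref{thcv}, and then to extract the two-sided bound (\ref{est0}) by estimating the exponential weights on $\mathbf{I}_t$ and optimizing the resulting constants. First I would start from the Parseval identity (\ref{nan+}) and split the integral over $(vt,L_2+vt)$ into the physical part $\mathbf{I}_t=(vt,L+vt)$ and the extended part $(L+vt,L_2+vt)$. On $\mathbf{I}_t$ the extensions coincide with $\phi$, so that piece contributes $\int_{\mathbf{I}_t}e^{\frac{1-v}{L}\ln|\gamma_\eta|(t+x)}(\phi_x+\phi_t)^2\,dx$. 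For the extended part I would insert the reflection formulas (\ref{phix+}) and (\ref{phit+}), which give $\tilde\phi_x+\tilde\phi_t=\gamma_v^{-1}(\phi_x-\phi_t)$ evaluated at the reflected point, and perform the change of variable $\xi=\gamma_v^{-1}(vt-x)+\tfrac{2L}{1+v}+vt$ already used in Theorem \ref{thcv}, which maps $(L+vt,L_2+vt)$ onto $\mathbf{I}_t$ with $dx=-\gamma_v\,d\xi$.

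The crux of the first part is the transformation of the weight. I would verify that under this change of variable $t+x=\gamma_v(t-\xi)+L_2$, whence $\frac{1-v}{L}\ln|\gamma_\eta|(t+x)=\frac{1+v}{L}\ln|\gamma_\eta|(t-\xi)+2\ln|\gamma_\eta|$ and consequently $e^{\frac{1-v}{L}\ln|\gamma_\eta|(t+x)}=\gamma_\eta^2\,e^{\frac{1+v}{L}\ln|\gamma_\eta|(t-\xi)}$. Collecting the Jacobian $\gamma_v$, the factor $\gamma_v^{-2}$ from $(\tilde\phi_x+\tilde\phi_t)^2$, and this $\gamma_\eta^2$, the extended part becomes $\frac{\gamma_\eta^2}{\gamma_v}\int_{\mathbf{I}_t}e^{\frac{1+v}{L}\ln|\gamma_\eta|(t-\xi)}(\phi_x-\phi_t)^2\,d\xi$. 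Substituting both pieces into (\ref{nan+}) and dividing by $(1-v)\gamma_\eta^2$ (using $(1-v)\gamma_v=1+v$) yields exactly (\ref{Sl}); its left side is independent of $t$ and finite because the right side $\frac{8}{L}\sum_n|\omega_n a_n|^2$ is, by (\ref{nan+}).

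For the decay estimate I would write $E_v(t)=\tfrac14\int_{\mathbf{I}_t}[(\phi_t+\phi_x)^2+(\phi_x-\phi_t)^2]\,dx$ and abbreviate $A(t)=\int_{\mathbf{I}_t}(\phi_t+\phi_x)^2\,dx$, $B(t)=\int_{\mathbf{I}_t}(\phi_x-\phi_t)^2\,dx$ and $W(t)=e^{\frac{1-v^2}{L}\ln|\gamma_\eta|t}$. On $\mathbf{I}_t$ one has $t+x\in((1+v)t,(1+v)t+L)$ and $t-x\in((1-v)t-L,(1-v)t)$, so monotonicity of the exponential (recall $|\gamma_\eta|\ge 1$) pins the weight in the $A$-integral between $W(t)$ and $|\gamma_\eta|^{1-v}W(t)$, and the weight in the $B$-integral between $|\gamma_\eta|^{-(1+v)}W(t)$ and $W(t)$. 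Feeding the lower weight bounds into (\ref{Sl}) gives $W(t)(\alpha B+\beta A)\le\frac{8}{L}\sum_n|\omega_n a_n|^2$ with $\alpha=\frac{|\gamma_\eta|^{-(1+v)}}{1+v}$ and $\beta=\frac{|\gamma_\eta|^{-2}}{1-v}$; since $A+B\le\min(\alpha,\beta)^{-1}(\alpha B+\beta A)$ this produces the upper bound with $M_2$. Feeding the upper weight bounds in gives the reverse inequality and, via $A+B\ge\max(\alpha',\beta')^{-1}(\alpha'B+\beta'A)$ with $\alpha'=\frac{1}{1+v}$ and $\beta'=\frac{|\gamma_\eta|^{-(1+v)}}{1-v}$, the lower bound with $M_1$; computing the resulting $\min$ and $\max$ of these coefficients reproduces the stated $M_1,M_2$.

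I expect the main obstacle to be the weight bookkeeping in the folding step, namely getting the $\gamma_\eta^2$ factor and the powers of $\gamma_v$ to line up so that the two physical integrals carry \emph{exactly} the coefficients $\frac{1}{1+v}$ and $\frac{1}{\gamma_\eta^2(1-v)}$, together with the $\min$/$\max$ optimization in the last step, which must be carried out carefully to land on the explicit constants rather than merely comparable ones.
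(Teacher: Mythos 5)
Your proposal is correct, and for the identity (\ref{Sl}) it is the same proof as the paper's: the same splitting of the Parseval identity (\ref{nan+}) into $\int_{\mathbf{I}_t}+\int_{L+vt}^{L_2+vt}$, the same reflection substitution (your $\xi=\gamma_v^{-1}(vt-x)+\tfrac{2L}{1+v}+vt$ is precisely the inverse of the paper's $x=\gamma_v(vt-\xi)+\tfrac{2L}{1-v}+vt$), and the same bookkeeping $e^{\frac{1-v}{L}\ln|\gamma_\eta|(t+x)}=\gamma_\eta^2\,e^{\frac{1+v}{L}\ln|\gamma_\eta|(t-\xi)}$ combined with the Jacobian $\gamma_v$ and the factor $\gamma_v^{-2}$ coming from the extensions. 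Where you genuinely diverge is the passage from (\ref{Sl}) to (\ref{est0}). The paper expands $(\phi_x\pm\phi_t)^2$, so that (\ref{Sl}) becomes $\int_{\mathbf{I}_t}(A+B)(\phi_t^2+\phi_x^2)\,dx-2\int_{\mathbf{I}_t}(A-B)\phi_t\phi_x\,dx=\frac{8}{L}\sum_{n}\left\vert\omega_na_n\right\vert^2$ with pointwise weights $A,B$; it then controls the cross term via $\pm2(A-B)\phi_t\phi_x\le\vert A-B\vert(\phi_t^2+\phi_x^2)$, i.e.\ a pointwise $\min\{A,B\}$ versus $\max\{A,B\}$ sandwich, and only afterwards bounds $A$ and $B$ by constants times $e^{\frac{1-v^2}{L}\ln|\gamma_\eta|t}$. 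You never expand: you use $4E_v(t)=\int_{\mathbf{I}_t}(\phi_t+\phi_x)^2dx+\int_{\mathbf{I}_t}(\phi_x-\phi_t)^2dx$, bound the exponential weights directly inside (\ref{Sl}) (this is where $|\gamma_\eta|\ge1$ and $vt\le x\le L+vt$ enter, exactly as in the paper), and carry out the $\min$/$\max$ optimization on the scalar coefficients $\alpha,\beta$ and $\alpha',\beta'$ multiplying the two unweighted integrals. Because the paper's pointwise bounds on $A$ and $B$ are spatially constant anyway, the two optimizations coincide and both routes produce exactly the stated $M_1$ and $M_2$; what your version buys is a cleaner computation with no cross terms $\phi_t\phi_x$ and no pointwise algebraic inequality, while the paper's version displays explicitly the intermediate two-sided comparison between $E_v(t)$ and $\frac{2}{L}e^{-\frac{1-v^2}{L}\ln|\gamma_\eta|t}\sum_{n}\left\vert\omega_na_n\right\vert^2$ that is reused verbatim in Corollary \ref{coro4} --- a comparison your inequalities also yield after trivial rearrangement.
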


\begin{proof}
Let us split the integral in the identity (\ref{nan+}) to the integrals%
\begin{equation}
\int\limits_{vt}^{L_{2}+vt}=\int_{\mathbf{I}_{t}}+\int%
\limits_{L+vt}^{L_{2}+vt},  \label{32a}
\end{equation}%
then considering the change of variable $x=\gamma _{v}\left( vt-\xi \right) +%
\frac{2L}{1-v}+vt$ in $\left( L+vt,L_{2}+vt\right) $, we obtain%
\begin{multline}
\frac{1}{\left( 1-v\right) \gamma _{\eta }^{2}}\int%
\limits_{L+vt}^{L_{2}+vt}e^{\frac{1-v}{L}\ln \left\vert \gamma _{\eta
}\right\vert (t+x)}\left( \tilde{\phi}_{x}\left( x,t\right) +\tilde{\phi}%
_{t}\left( x,t\right) \right) ^{2}dx  \label{32b} \\
=-\frac{1}{\left( 1-v\right) \gamma _{\eta }^{2}}\int%
\limits_{L+vt}^{L_{2}+vt}\frac{\gamma _{v}}{\gamma _{v}^{2}}e^{\frac{1-v}{L}%
\ln \left\vert \gamma _{\eta }\right\vert (t+\gamma _{v}\left( vt-\xi
\right) +vt)}e^{2\ln \left\vert \gamma _{\eta }\right\vert }\left( \tilde{%
\phi}_{x}\left( \xi ,t\right) -\tilde{\phi}_{t}\left( \xi ,t\right) \right)
^{2}d\xi \\
=\frac{1}{1+v}\int_{\mathbf{I}_{t}}e^{\frac{1+v}{L}\ln \left\vert \gamma
_{\eta }\right\vert (t-\xi )}\left( \phi _{x}\left( \xi ,t\right) -\phi
_{t}\left( \xi ,t\right) \right) ^{2}d\xi .
\end{multline}
We used the definition of the extensions (\ref{phix+}), (\ref{phit+}) and
the fact that $e^{2\ln \left\vert \gamma _{\eta }\right\vert }=\gamma _{\eta
}^{2}.$ Then, combining (\ref{32a}) and (\ref{32b}) we obtain (\ref{Sl}).

Expanding $\left( \phi _{x}\pm \phi _{t}\right) ^{2}$ and collecting similar
terms, we get%
\begin{multline}
\int_{\mathbf{I}_{t}}\left( \frac{1}{1+v}e^{\frac{1+v}{L}\ln \left\vert
\gamma _{\eta }\right\vert (t-x)}+\frac{1}{\gamma _{\eta }^{2}\left(
1-v\right) }e^{\frac{1-v}{L}\ln \left\vert \gamma _{\eta }\right\vert
(t+x)}\right) \left( \phi _{t}^{2}+\phi _{x}^{2}\right) dx  \label{=E0} \\
-2\int_{\mathbf{I}_{t}}\left( \frac{1}{1+v}e^{\frac{1+v}{L}\ln \left\vert
\gamma _{\eta }\right\vert (t-x)}-\frac{1}{\gamma _{\eta }^{2}\left(
1-v\right) }e^{\frac{1-v}{L}\ln \left\vert \gamma _{\eta }\right\vert
(t+x)}\right) \phi _{t}\phi _{x}dx \\
=\frac{8}{L}\sum_{n\in \mathbb{%
\mathbb{Z}
}}\left\vert \omega _{n}a_{n}\right\vert ^{2}.
\end{multline}%
For $vt\leq x\leq L+vt$ and $t\geq 0,$ let us denote%
\begin{equation*}
A\left( x,t\right) =\frac{1}{1+v}e^{\frac{1+v}{L}\ln \left\vert \gamma
_{\eta }\right\vert (t-x)}\text{ \ and \ }B\left( x,t\right) =\frac{1}{%
\gamma _{\eta }^{2}\left( 1-v\right) }e^{\frac{1-v}{L}\ln \left\vert \gamma
_{\eta }\right\vert (t+x)}.
\end{equation*}%
Then, we can rewrite (\ref{=E0}) as 
\begin{equation*}
\int_{\mathbf{I}_{t}}\left( A+B\right) \left( \phi _{t}^{2}+\phi
_{x}^{2}\right) dx-2\int_{\mathbf{I}_{t}}\left( A-B\right) \phi _{t}\phi
_{x}dx=\frac{8}{L}\sum_{n\in \mathbb{%
\mathbb{Z}
}}\left\vert \omega _{n}a_{n}\right\vert ^{2}.
\end{equation*}%
Using the algebraic inequality%
\begin{equation*}
-\left\vert A-B\right\vert \left( \phi _{t}^{2}+\phi _{x}^{2}\right) \leq
\pm 2\left( A-B\right) \phi _{t}\phi _{x}\leq \left\vert A-B\right\vert
\left( \phi _{t}^{2}+\phi _{x}^{2}\right) ,
\end{equation*}%
we get%
\begin{multline*}
\int_{\mathbf{I}_{t}}\left( \left( A+B\right) -\left\vert A-B\right\vert
\right) \left( \phi _{t}^{2}+\phi _{x}^{2}\right) dx\leq \frac{2}{L}%
\sum_{n\in \mathbb{%
\mathbb{Z}
}}\left\vert 2\omega _{n}a_{n}\right\vert ^{2} \\
\leq \int_{\mathbf{I}_{t}}\left( \left( A+B\right) +\left\vert
A-B\right\vert \right) \left( \phi _{t}^{2}+\phi _{x}^{2}\right) dx.
\end{multline*}%
Knowing that $\left( a+b\right) -\left\vert a-b\right\vert =2\min \left\{ a,b\right\}$ 
 and $\left( a+b\right) +\left\vert a-b\right\vert =2\max \left\{
a,b\right\}$, for $a,b\in 
\mathbb{R}
,$ then the precedent estimation reads%
\begin{equation*}
\int_{\mathbf{I}_{t}}\min \left\{ A,B\right\} \left( \phi _{t}^{2}+\phi
_{x}^{2}\right) dx\leq \frac{4}{L}\sum_{n\in \mathbb{%
\mathbb{Z}
}}\left\vert \omega _{n}a_{n}\right\vert ^{2}\leq \int_{\mathbf{I}_{t}}\max
\left\{ A,B\right\} \left( \phi _{t}^{2}+\phi _{x}^{2}\right) dx.
\end{equation*}%
Since $\ln \left\vert \gamma _{\eta }\right\vert \geq 0$ and $vt\leq x\leq
L+vt,$ we have 
\begin{equation*}
\frac{1}{1+v}e^{\frac{1+v}{L}\ln \left\vert \gamma _{\eta }\right\vert
(t-L-vt)}\leq A\left( x,t\right) \leq \frac{1}{1+v}e^{\frac{1+v}{L}\ln
\left\vert \gamma _{\eta }\right\vert (t-vt)},
\end{equation*}%
i.e.%
\begin{equation*}
\frac{e^{-\left( 1+v\right) \ln \left\vert \gamma _{\eta }\right\vert }}{1+v}%
e^{\frac{1-v^{2}}{L}\ln \left\vert \gamma _{\eta }\right\vert t}\leq A\left(
x,t\right) \leq \frac{1}{1+v}e^{\frac{1-v^{2}}{L}\ln \left\vert \gamma
_{\eta }\right\vert t}.
\end{equation*}%
Similarly, we obtain 
\begin{equation*}
\frac{1}{\gamma _{\eta }^{2}\left( 1-v\right) }e^{\frac{1-v^{2}}{L}\ln
\left\vert \gamma _{\eta }\right\vert t}\leq B\left( x,t\right) \leq \frac{%
e^{-\left( 1+v\right) \ln \left\vert \gamma _{\eta }\right\vert }}{1-v}e^{%
\frac{1-v^{2}}{L}\ln \left\vert \gamma _{\eta }\right\vert t}.
\end{equation*}%
It follows that%
\begin{multline*}
\min \left\{ \frac{1}{\left\vert \gamma _{\eta }\right\vert ^{\left(
1+v\right) }\left( 1+v\right) },\frac{1}{\gamma _{\eta }^{2}\left(
1-v\right) }\right\} E_{v}\left( t\right) \leq \frac{2e^{-\frac{1-v^{2}}{L}%
\ln \left\vert \gamma _{\eta }\right\vert t}}{L}\sum_{n\in \mathbb{%
\mathbb{Z}
}}\left\vert \omega _{n}a_{n}\right\vert ^{2} \\
\leq \max \left\{ \frac{1}{1+v},\frac{1}{\left\vert \gamma _{\eta
}\right\vert ^{\left( 1+v\right) }\left( 1-v\right) }\right\} E_{v}\left(
t\right) ,
\end{multline*}%
hence%
\begin{multline*}
\left( \frac{2}{L}\sum_{n\in \mathbb{%
\mathbb{Z}
}}\left\vert \omega _{n}a_{n}\right\vert ^{2}\right) \min \left\{
1+v,\left\vert \gamma _{\eta }\right\vert ^{\left( 1+v\right) }\left(
1-v\right) \right\} e^{-\frac{1-v^{2}}{L}\ln \left\vert \gamma _{\eta
}\right\vert t}\leq E_{v}\left( t\right)  \\
\leq \left( \frac{2}{L}\sum_{n\in \mathbb{%
\mathbb{Z}
}}\left\vert \omega _{n}a_{n}\right\vert ^{2}\right) \max \left\{ \left\vert
\gamma _{\eta }\right\vert ^{\left( 1+v\right) }\left( 1+v\right) ,\gamma
_{\eta }^{2}\left( 1-v\right) \right\} e^{-\frac{1-v^{2}}{L}\ln \left\vert
\gamma _{\eta }\right\vert t}.
\end{multline*}%
This shows (\ref{est0}) and the theorem follows.
\end{proof}

\begin{remark}
If $v=0$ in (\ref{est0})$,$ then we get the decay estimate%
\begin{equation*}
\left( \frac{2}{L}\sum_{n\in \mathbb{%
\mathbb{Z}
}}\left\vert \omega _{n}a_{n}\right\vert ^{2}\right) e^{-\frac{1}{L}\ln
\left\vert \gamma _{\eta }\right\vert t}\leq E_{0}\left( t\right) \leq
\gamma _{\eta }^{2}\left( \frac{2}{L}\sum_{n\in \mathbb{%
\mathbb{Z}
}}\left\vert \omega _{n}a_{n}\right\vert ^{2}\right) e^{-\frac{1}{L}\ln
\left\vert \gamma _{\eta }\right\vert t},
\end{equation*}%
which is sharper then the estimate (\ref{estv=0}),
stated in the introduction.
\end{remark}

\begin{remark}
The constants in estimation (\ref{est0}) are (at least) asymptotically sharp
in the sense that if $\eta \rightarrow 0$, we recover the estimation (\ref%
{ES0}) with its sharp constants, see Remark \ref{rmksharp}.
\end{remark}

The next corollary compares $E_{v}\left( t\right) $\ to the initial energy $%
E_{v}\left( 0\right) .$

\begin{corollary}
\label{coro4}Under the assumptions \emph{(\ref{tlike}) }and \emph{(\ref{ic}),%
} the energy of the solution of Problem \emph{(\ref{waveq})} satisfies%
\begin{multline}
\frac{\min \left\{ 1+v,\left\vert \gamma _{\eta }\right\vert ^{\left(
1+v\right) }\left( 1-v\right) \right\} }{\max \left\{ \left\vert \gamma
_{\eta }\right\vert ^{\left( 1+v\right) }\left( 1+v\right) ,\gamma _{\eta
}^{2}\left( 1-v\right) \right\} }E_{v}\left( 0\right) e^{-\frac{1-v^{2}}{L}%
\ln \left\vert \gamma _{\eta }\right\vert t}\leq E_{v}\left( t\right)
\label{stab0} \\
\leq \frac{\max \left\{ \left\vert \gamma _{\eta }\right\vert ^{\left(
1+v\right) }\left( 1+v\right) ,\gamma _{\eta }^{2}\left( 1-v\right) \right\} 
}{\min \left\{ 1+v,\left\vert \gamma _{\eta }\right\vert ^{\left( 1+v\right)
}\left( 1-v\right) \right\} }E_{v}\left( 0\right) e^{-\frac{1-v^{2}}{L}\ln
\left\vert \gamma _{\eta }\right\vert t},\text{ \ for}\ t\geq 0.
\end{multline}
\end{corollary}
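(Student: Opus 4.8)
The plan is to deduce this corollary directly from Theorem \ref{th-stabl}, which already supplies the two-sided bound
\[
M_1\, e^{-\frac{1-v^2}{L}\ln\left\vert\gamma_\eta\right\vert t}\le E_v(t)\le M_2\, e^{-\frac{1-v^2}{L}\ln\left\vert\gamma_\eta\right\vert t}.
\]
The decisive observation is that both $M_1$ and $M_2$ are the \emph{same} positive multiple $\tfrac{2}{L}\sum_{n\in\mathbb{Z}}\left\vert\omega_n a_n\right\vert^2$ of (respectively) a minimum and a maximum of explicit constants, and that the bounds asserted in (\ref{stab0}) carry the identical exponential factor $e^{-\frac{1-v^2}{L}\ln\left\vert\gamma_\eta\right\vert t}$ as the theorem. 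Consequently the only work is to trade the Fourier sum $\tfrac{2}{L}\sum_{n}\left\vert\omega_n a_n\right\vert^2$ for the initial energy $E_v(0)$.

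First I would evaluate the estimate of Theorem \ref{th-stabl} at $t=0$. Since the exponential factor equals $1$ there, this produces the sandwich $M_1\le E_v(0)\le M_2$, which after rearrangement gives two-sided control of the Fourier sum by $E_v(0)$:
\[
\frac{E_v(0)}{\max\left\{\left\vert\gamma_\eta\right\vert^{1+v}(1+v),\gamma_\eta^2(1-v)\right\}}
\le \frac{2}{L}\sum_{n\in\mathbb{Z}}\left\vert\omega_n a_n\right\vert^2
\le \frac{E_v(0)}{\min\left\{1+v,\left\vert\gamma_\eta\right\vert^{1+v}(1-v)\right\}}.
\]
I would then substitute these back into the general-$t$ estimate of Theorem \ref{th-stabl}: for the lower bound on $E_v(t)$ I combine the lower estimate (with its $\min$ prefactor) with the lower bound on the sum, and for the upper bound on $E_v(t)$ I combine the upper estimate (with its $\max$ prefactor) with the upper bound on the sum. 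In each case the $\sum_{n}\left\vert\omega_n a_n\right\vert^2$ factors cancel, leaving precisely the ratio of $\max$ over $\min$ (respectively $\min$ over $\max$) times $E_v(0)\,e^{-\frac{1-v^2}{L}\ln\left\vert\gamma_\eta\right\vert t}$, which is the claimed estimate (\ref{stab0}).

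I do not expect a genuine obstacle here: the statement is a purely algebraic consequence of Theorem \ref{th-stabl}, requiring no new analysis. The only point needing care is the bookkeeping of the inequalities, namely ensuring that the $\min$ constant is paired with the lower estimate and the $\max$ constant with the upper one, so that the numerator and denominator of the prefactor in (\ref{stab0}) are assigned correctly and no inequality is inadvertently reversed. This is entirely parallel to the passage from (\ref{ES0}) to (\ref{stab}) in Corollary \ref{coro3}, and indeed the present corollary reduces to that one in the limit $\eta\to 0$, where $\left\vert\gamma_\eta\right\vert\to 1$.
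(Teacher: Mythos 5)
Your proposal is correct and follows essentially the same route as the paper: the paper likewise evaluates the estimate (\ref{est0}) at $t=0$ to trade the Fourier sum $\tfrac{2}{L}\sum_{n\in\mathbb{Z}}\left\vert\omega_n a_n\right\vert^{2}$ for $E_v(0)$, and then chains the resulting two-sided bounds through that sum, pairing the $\min$ constant with the lower estimate and the $\max$ constant with the upper one. The bookkeeping you flag as the only delicate point is handled in exactly the same way in the paper's proof.
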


\begin{proof}
Since (\ref{est0}) holds also for $t=0$, then (\ref{stab0}) follows by
combining the two inequalities%
\begin{multline*}
\frac{e^{\frac{1-v^{2}}{L}\ln \left\vert \gamma _{\eta }\right\vert t}}{\max
\left\{ \left\vert \gamma _{\eta }\right\vert ^{\left( 1+v\right) }\left(
1+v\right) ,\gamma _{\eta }^{2}\left( 1-v\right) \right\} }E_{v}\left(
t\right) \leq \frac{2}{L}\sum_{n\in \mathbb{%
\mathbb{Z}
}}\left\vert \omega _{n}a_{n}\right\vert ^{2} \\
\leq \frac{1}{\min \left\{ 1+v,\left\vert \gamma _{\eta }\right\vert
^{\left( 1+v\right) }\left( 1-v\right) \right\} }E_{v}\left( 0\right)
\end{multline*}%
and 
\begin{multline*}
\frac{1}{\max \left\{ \left\vert \gamma _{\eta }\right\vert ^{\left(
1+v\right) }\left( 1+v\right) ,\gamma _{\eta }^{2}\left( 1-v\right) \right\} 
}E_{v}\left( 0\right) \leq \frac{2}{L}\sum_{n\in \mathbb{%
\mathbb{Z}
}}\left\vert \omega _{n}a_{n}\right\vert ^{2} \\
\leq \frac{e^{\frac{1-v^{2}}{L}\ln \left\vert \gamma _{\eta }\right\vert t}}{%
\min \left\{ 1+v,\left\vert \gamma _{\eta }\right\vert ^{\left( 1+v\right)
}\left( 1-v\right) \right\} }E_{v}\left( t\right) ,
\end{multline*}%
for\ $t\geq 0.$
\end{proof}

The next corollary gives more simple estimates, but less sharper then (\ref%
{est0}) and (\ref{stab0}), for the energy $E_{v}.$

\begin{corollary}
\label{coro5}Under the assumptions \emph{(\ref{tlike}) }and \emph{(\ref{ic}),%
} the energy of the solution of Problem \emph{(\ref{waveq})} satisfies%
\begin{multline}
\left( 1-v\right) \left( \frac{2}{L}\sum_{n\in \mathbb{%
\mathbb{Z}
}}\left\vert \omega _{n}a_{n}\right\vert ^{2}\right) \text{ }e^{-\frac{%
1-v^{2}}{L}\ln \left\vert \gamma _{\eta }\right\vert t}\leq E_{v}\left(
t\right)  \label{est1} \\
\leq \gamma _{\eta }^{2}\left( 1+v\right) \left( \frac{2}{L}\sum_{n\in 
\mathbb{%
\mathbb{Z}
}}\left\vert \omega _{n}a_{n}\right\vert ^{2}\right) \text{ }e^{-\frac{%
1-v^{2}}{L}\ln \left\vert \gamma _{\eta }\right\vert t},\text{ \ for}\ t\geq
0
\end{multline}%
and%
\begin{equation}
\frac{1}{\gamma _{\eta }^{2}\gamma _{v}}E_{v}\left( 0\right) e^{-\frac{%
1-v^{2}}{L}\ln \left\vert \gamma _{\eta }\right\vert t}\leq E_{v}\left(
t\right) \leq \gamma _{\eta }^{2}\gamma _{v}E_{v}\left( 0\right) e^{-\frac{%
1-v^{2}}{L}\ln \left\vert \gamma _{\eta }\right\vert t},\text{ \ for}\ t\geq
0.  \label{stab1}
\end{equation}
\end{corollary}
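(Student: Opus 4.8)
The plan is to deduce both estimates directly from the sharp bound (\ref{est0}) of Theorem \ref{th-stabl} by replacing the $\min$ and $\max$ appearing in $M_1$ and $M_2$ with cruder but cleaner one-sided bounds. The only facts I will need are the elementary inequalities $0\leq v<1$ and $\left\vert \gamma _{\eta }\right\vert \geq 1$ (the latter holding for every $\eta >0$ with $\eta \neq 1$, as recorded in the introduction). No new analysis of the series or of the solution is required; everything reduces to monotonicity of the power $\left\vert \gamma _{\eta }\right\vert ^{s}$ in the exponent $s$.

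First I would establish (\ref{est1}). For the lower constant, I bound the $\min$ in $M_1$ from below by $1-v$: indeed $1+v\geq 1-v$ since $v\geq 0$, while $\left\vert \gamma _{\eta }\right\vert ^{1+v}\left( 1-v\right) \geq 1-v$ because $\left\vert \gamma _{\eta }\right\vert ^{1+v}\geq 1$, so both arguments of the $\min$ dominate $1-v$. For the upper constant, I bound the $\max$ in $M_2$ from above by $\gamma _{\eta }^{2}\left( 1+v\right)$: the first argument satisfies $\left\vert \gamma _{\eta }\right\vert ^{1+v}\left( 1+v\right) \leq \left\vert \gamma _{\eta }\right\vert ^{2}\left( 1+v\right) =\gamma _{\eta }^{2}\left( 1+v\right)$ since $1+v\leq 2$ and $\left\vert \gamma _{\eta }\right\vert \geq 1$, while the second satisfies $\gamma _{\eta }^{2}\left( 1-v\right) \leq \gamma _{\eta }^{2}\left( 1+v\right)$ trivially. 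Substituting these two simplifications into (\ref{est0}) yields (\ref{est1}) at once.

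Next I would obtain (\ref{stab1}) by eliminating the Fourier sum between $E_{v}(t)$ and $E_{v}(0)$. Applying (\ref{est1}) at $t=0$ gives the two-sided sandwich
\begin{equation*}
\frac{E_{v}(0)}{\gamma _{\eta }^{2}(1+v)}\leq \frac{2}{L}\sum_{n\in \mathbb{Z}}\left\vert \omega _{n}a_{n}\right\vert ^{2}\leq \frac{E_{v}(0)}{1-v},
\end{equation*}
which I then insert back into the lower and upper halves of (\ref{est1}) at general $t$. Using $\gamma _{v}=(1+v)/(1-v)$, the lower estimate produces the prefactor $\tfrac{1-v}{\gamma _{\eta }^{2}(1+v)}=\tfrac{1}{\gamma _{\eta }^{2}\gamma _{v}}$ and the upper estimate produces $\tfrac{\gamma _{\eta }^{2}(1+v)}{1-v}=\gamma _{\eta }^{2}\gamma _{v}$, giving exactly (\ref{stab1}).

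The whole argument is routine bookkeeping of constants, so there is no real analytic obstacle; the only place demanding care is the comparison of the exponents in step two, where one must track that $1+v$ lies in $[1,2)$ to justify simultaneously $\left\vert \gamma _{\eta }\right\vert ^{1+v}\geq 1$ and $\left\vert \gamma _{\eta }\right\vert ^{1+v}\leq \gamma _{\eta }^{2}$. I expect this monotonicity check to be the main (and only) point where a sign or direction could be mishandled.
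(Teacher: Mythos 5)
Your proposal is correct and follows essentially the same route as the paper: both arguments reduce to crudely bounding the constants of (\ref{est0}), replacing the minimum by the lower bound $1-v$ and the maximum by the upper bound $\gamma _{\eta }^{2}\left( 1+v\right) $, via the elementary facts $0\leq v<1$ and $1\leq \left\vert \gamma _{\eta }\right\vert \leq \left\vert \gamma _{\eta }\right\vert ^{1+v}\leq \gamma _{\eta }^{2}$. The only cosmetic difference is that you derive (\ref{stab1}) by evaluating (\ref{est1}) at $t=0$ and eliminating the Fourier sum, whereas the paper simplifies the constants of (\ref{stab0}) in Corollary \ref{coro4}; since that corollary is itself proved by exactly this elimination, the two routes coincide and yield the same constants $\gamma _{\eta }^{-2}\gamma _{v}^{-1}$ and $\gamma _{\eta }^{2}\gamma _{v}$.
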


\begin{proof}
Since $0\leq v<1$, then it suffices to simplify the constants in (\ref%
{est0}) and (\ref{stab0}) using the fact that $1\leq \left\vert
\gamma _{\eta }\right\vert \leq \left\vert \gamma _{\eta }\right\vert
^{\left( 1+v\right) }<\gamma _{\eta }^{2}$.
\end{proof}

\begin{remark} This case with a dashpoint damping at the inlet bully can be easily investigated by
replacing $v$ by $-v$ in the above sections. In Corollary, \ref{coro3} we
have to change $v$ by $\left\vert v\right\vert ,$ i.e. 
\begin{equation*}
\frac{\mathcal{E}_{v}\left( 0\right) }{1+\left\vert v\right\vert }\leq
E_{v}\left( t\right) \leq \frac{\mathcal{E}_{v}\left( 0\right) }{%
1-\left\vert v\right\vert }\text{ and }\frac{E_{v}\left( 0\right) }{\gamma
_{\left\vert v\right\vert }}\leq E_{v}\left( t\right) \leq \gamma
_{\left\vert v\right\vert }E_{v}\left( 0\right) ,\ \text{for }t\geq 0.
\end{equation*}%
More importantly, we still have the same exponential decay $e^{-\frac{1-v^{2}%
}{L}\ln \left\vert \gamma _{\eta }\right\vert t}$ when $\eta >0.$ The
analogue of estimations (\ref{est1}) is 
\begin{multline*}
\frac{2\left( 1-\left\vert v\right\vert \right) }{L}\sum_{n\in \mathbb{%
\mathbb{Z}
}}\left\vert \omega _{n}a_{n}\right\vert ^{2}\text{ }e^{-\frac{1-v^{2}}{L}%
\ln \left\vert \gamma _{\eta }\right\vert t}\leq E_{v}\left( t\right)  \\
\leq \frac{2\gamma _{\eta }^{2}\left( 1+\left\vert v\right\vert \right) }{L}%
\sum_{n\in \mathbb{%
\mathbb{Z}
}}\left\vert \omega _{n}a_{n}\right\vert ^{2}\text{ }e^{-\frac{1-v^{2}}{L}%
\ln \left\vert \gamma _{\eta }\right\vert t},\text{ for}\ t\geq 0
\end{multline*}%
and $\gamma _{v}$ is replaced by $\gamma _{\left\vert v\right\vert }$ in (%
\ref{stab1}). \medskip 
\end{remark}

\section*{Acknowledgements} The authors have been supported by the General
Direction of Scientific Research and Technological Development (Algerian
Ministry of Higher Education and Scientific Research) PRFU \#
C00L03UN280120220010.

\subsection*{ORCID} Abdelmouhcene Sengouga https://orcid.org/0000-0003-3183-7973.


\begin{thebibliography}{99}
\bibitem{MaKa2014}
Marynowski~K, Kapitaniak~T. Dynamics of axially moving continua. Int J Mech
  Sci. 2014;\hspace{0pt}81:26--41.

\bibitem{HoPh2019}
Hong~KS, Pham~PT. Control of axially moving systems: A review. Int J Control
  Autom Syst. 2019;\hspace{0pt}17(12):2983--3008.

\bibitem{Chen2005}
Chen~LQ. Analysis and control of transverse vibrations of axially moving
  strings. Appl Mech Rev. 2005;\hspace{0pt}58(2):91--116.

\bibitem{Mira1960}
Miranker~WL. The wave equation in a medium in motion. IBM J Res Develop.
  1960;\hspace{0pt}4(1):36--42.

\bibitem{Mote1965}
Mote~CD. A study of band saw vibrations. J Franklin Inst.
  1965;\hspace{0pt}279(6):430--444.

\bibitem{TaYi1997}
Tan~CA, Ying~S. Dynamic analysis of the axially moving string based on wave
  propagation. ASME J Appl Mech. 1997;\hspace{0pt}64(2):394–400.

\bibitem{ChFe2014}
Chen~EW, Ferguson~NS. Analysis of energy dissipation in an elastic moving
  string with a viscous damper at one end. J Sound Vib,.
  2014;\hspace{0pt}333(9):2556--2570.

\bibitem{CHZW2019}
Chen~E, He~Y, Zhang~K, et~al. A superposition method of reflected wave for
  moving string vibration with nonclassical boundary. J Chin Inst Engineers.
  2019;\hspace{0pt}42(4):327--332.

\bibitem{CYFZ2021}
Chen~E, Yuan~J, Ferguson~N, et~al. A wave solution for energy dissipation and
  exchange at nonclassical boundaries of a traveling string. Mech Syst Sig
  Process. 2021;\hspace{0pt}150:107272.

\bibitem{LeMo1996}
Lee~SY, Mote~CD. {Vibration Control of an Axially Moving String by Boundary
  Control}. J Dyn Sys, Meas, Control. 1996;\hspace{0pt}118(1):66--74.

\bibitem{GaHo2015}
Gaiko~NV, van Horssen~WT. On the transverse, low frequency vibrations of a
  traveling string with boundary damping. J Vib Acoust.
  2015;\hspace{0pt}137(4).

\bibitem{GhSe2022}
Ghenimi~S, Sengouga~A. Free vibrations of axially moving strings: Energy
  estimates and boundary observability ; 2022. Submitted;
 \url{http://arxiv.org/abs/2201.01866}.

\bibitem{FuWW1999}
Fung~RF, Wu~JW, Wu~SL. Exponential stabilization of an axially moving string by
  linear boundary feedback. Automatica. 1999;\hspace{0pt}35(1):177--181.

\bibitem{Vese1988}
Veselić~K. On linear vibrational systems with one dimensional damping. Appl
  Anal. 1988;\hspace{0pt}29(1-2):1--18.

\bibitem{CoZu1995}
Cox~S, Zuazua~E. The rate at which energy decays in a string damped at one end.
  Indiana Univ Math J. 1995;\hspace{0pt}:545--573.

\bibitem{Cher1994}
Cherkaoui~M. {Estimation optimale du taux de d{\'e}croissance de l'{\'e}nergie
  pour une {\'e}quation des ondes avec contr{\^o}le fronti{\`e}re}. {INRIA};
  1994. Research Report RR-2328.
\url{https://hal.inria.fr/inria-00074346}.

\bibitem{QuRu1977}
Quinn~JP, Russell~DL. Asymptotic stability and energy decay rates for solutions
  of hyperbolic equations with boundary damping. Proc R Soc Edinb A: Math.
  1977;\hspace{0pt}77(1-2):97–127.

\bibitem{Seng2020}
Sengouga~A. Exact boundary observability and controllability of the wave
  equation in an interval with two moving endpoints. Evol Equ Control Theory.
  2020;\hspace{0pt}9(1):1--25.

\end{thebibliography}
\end{document}